\begin{document}
\providecommand{\keywords}[1]{\textbf{\textit{Keywords: }} #1}
\newtheorem{theorem}{Theorem}[section]
\newtheorem{lemma}[theorem]{Lemma}
\newtheorem{proposition}[theorem]{Proposition}
\newtheorem{corollary}[theorem]{Corollary}
\theoremstyle{definition}
\newtheorem{definition}{Definition}[section]
\theoremstyle{remark}
\newtheorem{remark}{Remark}
\newtheorem{conjecture}{Conjecture}
\newtheorem{question}{Question}

\newcommand{\cc}{{\mathbb{C}}}   
\newcommand{\ff}{{\mathbb{F}}}  
\newcommand{\nn}{{\mathbb{N}}}   
\newcommand{\qq}{{\mathbb{Q}}}  
\newcommand{\rr}{{\mathbb{R}}}   
\newcommand{\zz}{{\mathbb{Z}}}  

\title{On number fields with $k$-free discriminant}
\author{Joachim K\"onig}
\email{jkoenig@kaist.ac.kr}
\address{Department of Mathematical Sciences, Kaist, 291 Daehak-ro, Yuseong-gu, Daejeon (South Korea)}
\begin{abstract}
Given a finite transitive permutation group $G$, we investigate number fields $F/\mathbb{Q}$ of Galois group $G$ whose discriminant is only divisible by small prime powers. This generalizes previous investigations of number fields with squarefree discriminant.
In particular, we obtain a comprehensive result on number fields with cubefree discriminant.
Our main tools are arithmetic-geometric, involving in particular an effective criterion on ramification in specializations of Galois covers.
\end{abstract}
\subjclass[2010]{Primary 12F12, 11R32, 11R29}
\keywords{Galois extensions, number fields, specialization, permutation groups} 
\maketitle

\section{Introduction and overview of main results}
An area of special interest in the inverse Galois problem is the construction of field extensions with ``light ramification".
This can be understood in several different ways, all of which are important in certain applications. Firstly, one may try to minimize the number of ramified primes in an extension of number fields with prescribed Galois group. This is known as the minimal ramification problem. General conjectures on the minimum for Galois extensions of $\qq$ have been proposed, and proven for some special classes of groups (e.g., semiabelian $p$-groups in \cite{KNS}).
Furthermore, one may try to minimize the occurring ramification indices. This is studied in generality in \cite{KoeNS}.
Both of the above lines of research are also closely related to the investigation which groups occur as Galois groups of unramified Galois extensions over certain classes of number fields (e.g., of quadratic number fields).

In this article, we investigate a different minimization problem, namely minimization of the exponents occurring in the discriminant of a number field.
In particular, given a finite transitive group $G\le S_n$, call a finite separable (but in general non-Galois) field extension $F/K$ a $G$-extension if the Galois group of its Galois closure is permutation-isomorphic to $G$. We are interested in the smallest $k\in \mathbb{N}$, such that there exists a $G$-extension of $\qq$ with $k$-free discriminant; that is, whose discriminant is not divisible by any prime power $p^k$ (for given $k\in \mathbb{N}$).
The special case $k=2$, i.e., number fields with squarefree discriminant, has been investigated in-depth for several decades (e.g., \cite{Uchida}, \cite{Yamamoto}, \cite{Ked}, \cite{Bhargava}), including the stronger requirement of squarefree {\textit{polynomial}} discriminants. By an easy group-theoretical exercise, the Galois group of (the Galois closure of) a Galois extension with squarefree discriminant must be the full symmetric group. 
Much less is known, however, for $k\ge 3$.

The purpose of this paper is twofold: We give a general criterion (Theorem \ref{thm:chevalley_weil}), reducing the construction of number fields with prescribed inertia subgroups to function field extensions with such inertia subgroups, under mild requirements, using the theory of specializations of Galois covers. In Section \ref{sec:proof}, we then apply this criterion to the construction of extensions with $k$-free discriminant for smallest possible $k$, for certain classes of groups.
In particular, for the special case $k=3$, we cover all possible Galois groups, providing many  extensions with cubefree discriminant for each finite group which does not have a trivial obstruction to being the Galois group of such an extension (see Theorem \ref{thm:cubefree}). We also sketch how to obtain an analogous result in the case $k=4$ in the appendix (Section \ref{sec:append}). Our arguments are especially suited for Galois groups which are close to a full wreath product, via a careful construction of composition of covers with prescribed ramification types and prescribed specialization behaviours (see especially Lemmas \ref{lem:wreath}, \ref{a2n_intersect} and \ref{lem:last}).
In Section \ref{sec:variants}, we discuss some related problems of interest.

\section{Discriminants and indices in permutation groups}
\begin{definition}
Let $G\le S_n$ be a permutation group of degree $n$. For $\sigma\in G$, the index $ind(\sigma)$ is defined as $n$ minus the number of disjoint cycles of $\sigma$.\\
We define the generator index of $G$ as 
$$gi(G):=\min_{S\subset G: G = \langle S \rangle} \max_{\sigma \in S} ind(\sigma).$$
\end{definition}

The relevance of the above definition in Galois theory is due to the following easy observation. 
\begin{proposition}
Let $F/\mathbb{Q}$ be a degree-$n$ extension with Galois group $G\le S_n$, and let $e$ be the highest exponent of any prime number occurring in the prime factorization of the discriminant $\Delta(F)$.
Then $gi(G) \le e$, and if equality holds, then $F/\mathbb{Q}$ is tamely ramified.
\end{proposition}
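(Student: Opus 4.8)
The plan is to reduce everything to the inertia subgroups of the Galois closure $\widetilde{F}/\qq$ acting on the set $\Omega$ of the $n$ cosets of $\mathrm{Gal}(\widetilde{F}/F)$. Fix a rational prime $p$ and a prime of $\widetilde{F}$ above it, with decomposition group $D_p$ and inertia group $I_p\le D_p$ inside $G$. I would first recall the orbit description of the local data at $p$: the primes $\mathfrak{p}$ of $F$ above $p$ correspond to the $D_p$-orbits on $\Omega$, and for the prime attached to such an orbit, $e(\mathfrak{p}\mid p)$ is the common length of the $I_p$-orbits it contains while $f(\mathfrak{p}\mid p)$ is the number of those $I_p$-orbits. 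Substituting this into $v_p(\Delta(F))=\sum_{\mathfrak{p}\mid p}f(\mathfrak{p}\mid p)\,d(\mathfrak{p}\mid p)$ and using the standard estimate $d(\mathfrak{p}\mid p)\ge e(\mathfrak{p}\mid p)-1$ --- an equality exactly for tame $\mathfrak{p}$, and $d(\mathfrak{p}\mid p)\ge e(\mathfrak{p}\mid p)$ for wild $\mathfrak{p}$ --- an elementary count gives
\[
v_p(\Delta(F))\ \ge\ n-\#\{\text{orbits of }I_p\text{ on }\Omega\},
\]
an equality if $p$ is tamely ramified in $F$, and with $v_p(\Delta(F))\ge n-\#\{\text{orbits of }I_p\text{ on }\Omega\}+1$ if $p$ is wildly ramified in $F$. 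Furthermore $\langle\sigma\rangle$ never has fewer orbits on $\Omega$ than $I_p$, so $ind(\sigma)\le n-\#\{\text{orbits of }I_p\text{ on }\Omega\}$ for all $\sigma\in I_p$, with equality for a generator whenever $I_p$ acts cyclically on $\Omega$; in particular, when $p$ is tamely ramified in $\widetilde{F}$ the group $I_p$ is cyclic and $v_p(\Delta(F))=ind(\sigma_p)$ for a generator $\sigma_p$.

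The bound $gi(G)\le e$ is then immediate: by Minkowski's theorem $\qq$ has no nontrivial unramified extension, so the subgroup of $G$ generated by all inertia subgroups $I_p$ --- over all primes $p$ and all conjugates --- has trivial fixed field and hence equals $G$. Choosing a generating set $S$ of $G$ whose elements each lie in a conjugate of some $I_p$, the inequalities above give $ind(\sigma)\le n-\#\{\text{orbits of }I_p\}\le v_p(\Delta(F))\le e$ for every $\sigma\in S$, so $gi(G)\le\max_{\sigma\in S}ind(\sigma)\le e$.

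For the equality statement I would argue the contrapositive: assuming $F/\qq$ wildly ramified, say at $p$, I will show $gi(G)<e$. For ramified $q$ put $m_q:=n-\#\{\text{orbits of }I_q\text{ on }\Omega\}$, so that $gi(G)\le\max_q m_q\le e$, with $m_q\le e-1$ whenever $q$ is wildly ramified in $F$ (since then $e\ge v_q(\Delta(F))\ge m_q+1$). If $m_q\le e-1$ for \emph{every} ramified $q$ we are done, as the inertia-based set $S$ above already has all indices $\le e-1$. Otherwise some ramified $q_0$ has $m_{q_0}=e$; such a $q_0$ is necessarily tamely ramified in $F$ (a wild one would force $v_{q_0}(\Delta(F))\ge e+1$, impossible), and the difficulty is that the inertia at $q_0$ may contribute to $S$ an element of index exactly $e$ that we must eliminate. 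I would handle this by passing to the fixed field $L:=\widetilde{F}^{N}$ of the normal subgroup $N\trianglelefteq G$ generated by the inertia at all ramified $q$ with $m_q\le e-1$: this $N$ is generated by elements of index $\le e-1$ and contains the inertia at the wild prime $p$, while $L/\qq$ is tamely ramified, unramified at $p$, and ramified only at the ``large'' primes $q$ with $m_q=e$; invoking once more that $\qq$ has no nontrivial unramified extension --- now against the rigidity of tame ramification over $\qq$ --- I expect to force $N=G$, and hence $gi(G)\le e-1<e$, the desired contradiction. This final step is the one I anticipate being the main obstacle; the remainder is bookkeeping with the orbit formula.
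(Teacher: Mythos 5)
Your first half, the bound $gi(G)\le e$, is correct and complete, and it is essentially the paper's own argument (Minkowski's theorem plus a per-prime comparison of $v_p(\Delta(F))$ with indices of inertia elements), with the orbit bookkeeping that the paper leaves implicit written out in full. In particular the inequality $v_p(\Delta(F))\ge n-\#\{\text{orbits of }I_p\text{ on }\Omega\}$, with equality exactly when $p$ is tame in $F$ and with an extra $+1$ otherwise, is the correct quantitative form of the paper's ``two well-known facts''.

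The second half has a genuine gap, at exactly the step you flagged, and it cannot be closed. There is no ``rigidity of tame ramification over $\qq$'': $\qq$ has plenty of nontrivial tamely ramified extensions (e.g.\ $\qq(\sqrt{5})$), so nothing forces your normal subgroup $N$ to equal $G$. The structural problem is that the strict inequality $v_p(\Delta(F))>m_p$ at a wild prime $p$ only controls the inertia \emph{at $p$}; it does not prevent a different, tamely ramified prime $q_0$ with $m_{q_0}=e$ from supplying the index-$e$ generators that make $gi(G)=e$. In fact the implication ``$gi(G)=e\Rightarrow F/\qq$ tame'' fails as stated. Take $F=\qq(\theta)$ with $\theta^4-4\theta^2-829=0$, i.e.\ $\theta=\sqrt{2+7\sqrt{17}}$, and $K=\qq(\sqrt{17})$. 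Since $-829=N_{K/\qq}(2+7\sqrt{17})$ is neither a square nor $17$ times a square, $G=D_4=C_2\wr S_2$ in degree $4$, so $gi(G)=2$. At $p=2$: $17$ is a square in $\qq_2$ and the two images of $2+7\sqrt{17}$ in $\qq_2$ are $\equiv 1$ and $\equiv 3 \pmod 8$, so $F\otimes\qq_2\cong\qq_2\times\qq_2\times\qq_2(\sqrt{3})$, whence $v_2(\Delta(F))=2$ and $2$ is \emph{wildly} ramified. At $17$: $2+7\sqrt{17}\equiv 2$ is a square modulo $\sqrt{17}$, so $F/K$ is unramified above $17$ and $v_{17}(\Delta(F))=2$ (tame, with double-transposition inertia not contained in $\mathrm{Gal}(\widetilde{F}/K)$). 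At the prime $829$ the ramification is tame with $v_{829}(\Delta(F))=1$. Hence $\Delta(F)=-2^2\cdot 17^2\cdot 829$ and $e=2=gi(G)$, yet $F/\qq$ is wild at $2$. What your (and the paper's) argument actually yields is only that \emph{some} tamely ramified prime $q_0$ with $v_{q_0}(\Delta(F))=e$ must exist when $gi(G)=e$; the paper's one-line proof passes over the same point, since the strictness it invokes at a wild prime does not propagate to the maximum over all primes.
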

\begin{proof} This only requires two well-known facts: that the exponent of a prime $p$ in the discriminant of $F$ equals the index of the inertia group generator in the case of tame ramification (and is larger than the index of any inertia group element in case of wild ramification), and that the normal closure of the set of all inertia groups equals the whole Galois group $G$.
\end{proof}

An obvious question is whether the converse holds, i.e., 
\begin{question}
\label{ques:small_disc}
Let $G$ be a transitive degree-$n$ permutation group of generator index $e$. Does there always exist a tame degree-$n$ $G$-extension $F/\mathbb{Q}$ whose discriminant is $(e+1)$-free (i.e., not divisible by any prime power $p^{e+1}$)?
\end{question}

Special cases of this question lead to connections with several problems of interest. We point out a few of those:
\begin{remark}
\begin{itemize}
\item[a)]
It is an easy exercise to show that a transitive permutation group of generator index $1$ is necessarily a symmetric group. In this case, it is classical that there are many $S_n$-extensions $F/\mathbb{Q}$ of degree $n$ with squarefree discriminant. Question \ref{ques:small_disc} is the natural generalization of this case to a problem depending on a prescribed group.
\item[b)] Let $G=D_n$ be the dihedral group of degree $n$. Then $gi(G) = \begin{cases}\frac{n-1}{2}, n \text{ odd} \\ \frac{n}{2}, n \text{ even} \end{cases}$. More precisely, a generating set attaining the bound $gi(G)$ is the set of reflections in $D_n$.
Question \ref{ques:small_disc} then reduces to the question whether there exist $D_n$-Galois extensions $E/\qq$ (unramified at $2$) such such that $E$ is unramified over the quadratic subextension fixed by the rotations in $D_n$. This is known to have a positive answer by class field theory, since there are many quadratic number fields (unramified at $2$) whose class number is divisible by $n$.
\item[c)] For a group generated by involutions (e.g., a non-abelian simple group) in its regular permutation action, Question \ref{ques:small_disc} is equivalent to the question whether there exist tame Galois extensions of $\mathbb{Q}$ with group $G$ all of whose non-trivial inertia groups are of order $2$. Cf.\ e.g. \cite{KRS} for investigation of such extensions.
\end{itemize}
\end{remark}

It should also be pointed out that the obvious analog of Question \ref{ques:small_disc} over the field $\cc(t)$ instead of $\qq$ has a positive answer due to Riemann's existence theorem.

For our criteria, as well as for certain inductive arguments, the following strong version of Question \ref{ques:small_disc} is useful:
\begin{question}
\label{ques:small_disc_strong}
Let $G$ be a transitive degree-$n$ permutation group of generator index $e$. Is it true that, for any positive integer $N$, there exists a degree-$n$ $G$-extension $F/\mathbb{Q}$ whose discriminant is $(e+1)$-free and coprime to $N$?\end{question}

Below is a first observation about Question \ref{ques:small_disc_strong}, regarding compatibility with direct products. To this end, let $G$ and $H$ be transitive groups of degrees $m$ and $n$. 
Then the direct product $G\times H$ naturally becomes a transitive group of degree $mn$. 
\begin{lemma}
\label{lem:dir_prod}
If Question \ref{ques:small_disc_strong} has a positive answer for $G$ and for $H$, then also for $G\times H$. In particular, it has a positive answer for all abelian groups.
\end{lemma}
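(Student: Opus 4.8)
The plan is to build a $(G\times H)$-extension of $\qq$ by composing a $G$-extension $F_G/\qq$ and an $H$-extension $F_H/\qq$ whose discriminants have disjoint supports, so that ramification in the composite factors cleanly and the index bounds add up correctly. First I would fix the target modulus $N$ and choose, using the positive answer for $G$, a degree-$m$ $G$-extension $F_G/\qq$ with $(e_G+1)$-free discriminant coprime to $N$, where $e_G = gi(G)$; let $\Delta_G := \Delta(F_G)$. Then I would apply the positive answer for $H$ with the enlarged modulus $N \cdot \Delta_G$ to obtain a degree-$n$ $H$-extension $F_H/\qq$ with $(e_H+1)$-free discriminant coprime to $N\Delta_G$, where $e_H = gi(H)$. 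Thus $\Delta_G$ and $\Delta_H := \Delta(F_H)$ are coprime and both coprime to $N$.

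Next I would verify the group theory: the Galois closures $\widehat{F_G}$ and $\widehat{F_H}$ have Galois groups $G$ and $H$ respectively, and since $\Delta_G, \Delta_H$ are coprime, the two Galois closures are linearly disjoint over $\qq$ (they are ramified at disjoint sets of primes, hence their intersection is an everywhere-unramified abelian-or-not extension of $\qq$, necessarily trivial). Therefore $\mathrm{Gal}(\widehat{F_G}\widehat{F_H}/\qq) \cong G\times H$, and the degree-$mn$ field $F_G \cdot F_H$ is a $(G\times H)$-extension in the stated permutation action (one checks the point stabilizer of $G\times H$ acting on the product set is exactly $\mathrm{Gal}(\widehat{F_G}\widehat{F_H}/F_GF_H)$). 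For the discriminant bound, I would use that at any prime $p$, at most one of $F_G, F_H$ is ramified, so the inertia group at $p$ in $G\times H$ lies entirely in the $G$-factor or the $H$-factor; hence the exponent of $p$ in $\Delta(F_GF_H)$ is governed by the index function of a single factor acting on the product of $\{1,\dots,m\}$ and $\{1,\dots,n\}$. A short computation shows $ind_{G\times H}(\sigma, 1) = n\cdot ind_G(\sigma)$ and $ind_{G\times H}(1,\tau) = m\cdot ind_H(\tau)$, so the exponent at a $G$-ramified prime is at most $n\cdot e_G \le n\cdot(e_G+1) - 1$, and symmetrically for $H$-ramified primes; this needs to be reconciled with $gi(G\times H)$, which by the same index computation equals $\min\{n\cdot e_G,\, m\cdot e_H\}$ only if one is careful — in fact $gi(G\times H) = \max$ is false and the correct statement is that one may generate $G\times H$ by a generating set of $G$ (embedded as $(S_G,1)$) together with one of $H$, giving $gi(G\times H)\le \max\{n\cdot e_G, m\cdot e_H\}$; combined with the discriminant computation above this yields that $\Delta(F_GF_H)$ is $(gi(G\times H)+1)$-free, using that a $G$-ramified prime contributes exponent $\le n e_G \le \max\{n e_G, m e_H\} = gi(G\times H)$ and likewise for $H$-ramified primes.

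The main obstacle I anticipate is precisely this last bookkeeping with the generator index of $G\times H$ in its product action: one must show that the maximum over a well-chosen generating set matches the exponents that actually appear in the discriminant, i.e., that there is no gap between the group-theoretic invariant $gi(G\times H)$ and what the construction achieves. The cleanest route is to prove $gi(G\times H) = \max\{ \deg(H)\cdot gi(G),\ \deg(G)\cdot gi(H)\}$ by noting the lower bound (any generating set of $G\times H$ projects onto generating sets of both factors, forcing some generator with large index in each projection, hence large index in the product action) and the upper bound (take optimal generating sets $S_G, S_H$ and use $S_G\times\{1\} \cup \{1\}\times S_H$), then observe that every ramified prime of $F_GF_H$ contributes an exponent bounded by one of the two terms in that maximum. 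Finally, for the "in particular" clause: every finite abelian group is a direct product of cyclic groups, each cyclic group $C_d$ in its regular action has $gi(C_d) = d - 1$ attained by a single generator, and Question \ref{ques:small_disc_strong} for $C_d$ follows from the existence of infinitely many $C_d$-extensions of $\qq$ ramified outside any given finite set (e.g.\ inside cyclotomic fields $\qq(\zeta_p)$ for primes $p\equiv 1\bmod d$, $p\nmid N$, which are tamely ramified only at $p$ with inertia generating all of $C_d$), so iterating the direct-product step proves the abelian case.
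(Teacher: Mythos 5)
Your main argument is correct and is essentially the paper's: you compute the cycle type of $(x,y)$ to get $ind_{G\times H}(x,1)=\deg(H)\cdot ind_G(x)$ and that $(x,1)$ minimizes the index among elements projecting to $x$, deduce $gi(G\times H)=\max\{\deg(H)\,gi(G),\,\deg(G)\,gi(H)\}$ (the sentence in your second paragraph asserting that ``$gi(G\times H)=\max$ is false'' is a slip that you correct in your final paragraph --- the maximum formula is the right one), and then compose two realizations with coprime discriminants; the linear-disjointness argument via Minkowski and the prime-by-prime discriminant bookkeeping are fine and match the paper's appeal to ``standard discriminant formulae''.

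However, your treatment of the ``in particular'' clause contains a genuine error. You decompose an abelian group into cyclic factors $C_d$ of arbitrary order and claim $gi(C_d)=d-1$. This is false unless $d$ is a prime power: for example, $C_6=\langle a\rangle$ is generated by $\{a^2,a^3\}$, of indices $4$ and $3$, and indeed your own product formula gives $gi(C_6)=gi(C_2\times C_3)=\max\{3\cdot 1,\,2\cdot 2\}=4$, not $5$. Consequently the degree-$d$ subfield of $\qq(\zeta_p)$, whose discriminant is $p^{d-1}$ by conductor--discriminant, is \emph{not} $(gi(C_d)+1)$-free when $d$ is not a prime power (for $d=6$ you would need a $5$-free discriminant, but you produce $p^5$). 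The fix is exactly what the paper does: reduce to cyclic groups of \emph{prime power} order $p^k$, for which every generating set must contain a $p^k$-cycle, so $gi(C_{p^k})=p^k-1$ and any tame realization with discriminant coprime to $N$ (e.g.\ your cyclotomic one) works; general cyclic and abelian groups are then recovered by iterating the direct-product step itself.
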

\begin{proof}
If $x\in G$ consists of disjoint cycles of lengths $e_1,...,e_r$ and $y\in H$ of cycles of lengths $f_1,...,f_s$, then it is an easy exercise to show that $(x,y)$ has exactly the cycle lengths $lcm(e_i,f_j)$ repeated $gcd(e_i,f_j)$ times, for each $1\le i\le r$ and $1\le j\le s$.  In particular, the smallest index of an element of $G\times H$ projecting to $x\in G$ is the one of $(x,1)$ itself. Since any generating system of $G\times H$ projects to a generating system of $G$ and of $H$, the generator index of $G\times H$ is therefore given as $\max\{n \cdot gi(G), m\cdot gi(H)\}$.
Therefore, if $E/\qq$ and $F/\qq$ are degree-$n$ and -$m$ extensions answering Question \ref{ques:small_disc} in the positive for $G$ and for $H$ respectively, and assumed without loss to have discriminants coprime to some given $N$ and to each other, then $EF/\mathbb{Q}$ yields a positive answer for $G\times H$ with discriminant coprime to $N$, by standard discriminant formulae.
The conclusion about abelian groups follows since these have only one transitive permutation action (the regular one), and the above reduces the problem to cyclic groups of prime power order $p^d$, for which any tame Galois realization yields a positive answer to Question \ref{ques:small_disc} (since the generator index equals $p^d-1$). 
\end{proof}

We now turn to groups of generator index $2$.
\begin{proposition}
\label{prop:gen_index2}
Let $G$ be a transitive permutation group of generator index $\le 2$. Then one of the following holds:
\begin{itemize}
\item[(1)] $G = A_n$ or $G=S_n$, in their natural degree-$n$ permutation action.
\item[(2)] $G = C_2 \wr S_n$, the imprimitive wreath product of permutation degree $2n$.
\item[(3)] $G = (C_2\wr S_n) \cap A_{2n} \le C_2\wr S_n$, with the same degree $2n$ action as in (2).
\item[(4)] $G$ one of $D_5$, $PSL_2(5)$, $PSL_3(2)$ and $AGL_3(2)$ in their natural permutation actions of degree $5$, $6$, $7$ and $8$ respectively.
\end{itemize}
\end{proposition}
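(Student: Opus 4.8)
The plan is to translate the condition $gi(G) \le 2$ into a structural constraint on the inertia-free generators of $G$, then use the classification of primitive groups (and an induction on block systems) to pin down $G$. First I would observe that if $G = \langle S \rangle$ with every $\sigma \in S$ of index $\le 2$, then each generator is either the identity, a transposition (index $1$), a $3$-cycle (index $2$), or a product of two disjoint transpositions (index $2$). So $G$ is generated by elements from the set $T$ of all transpositions, $3$-cycles, and double-transpositions it contains. The key dichotomy is whether $S$ can be taken to contain a transposition: if $G$ contains a transposition and is transitive, a classical argument (the "transposition graph" must be connected) forces $G = S_n$. If $G$ contains a $3$-cycle and is primitive, then by Jordan's theorem $G \supseteq A_n$, giving case (1). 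So the interesting case is when $G$ is generated by double-transpositions (possibly together with $3$-cycles, but with no single transposition available in any minimal generating set of index $\le 2$).

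Next I would handle the primitive case directly. If $G$ is primitive of degree $n$ and generated by elements of index $\le 2$ but contains no transposition, the possibilities are severely limited: a primitive group containing a $3$-cycle is $A_n$ or $S_n$ (Jordan), so assume $G$ is primitive and generated by double-transpositions. Such elements have support of size $4$; a primitive group with a nontrivial element of support $4$ has bounded degree by Jordan-type bounds (elements of small support force the group to be small or a known exception). I would invoke the classification of primitive groups containing an element with small support — concretely, this is where the sporadic cases in (4), namely $D_5$, $PSL_2(5)$, $PSL_3(2) \cong PSL_2(7)$, and $AGL_3(2)$ of degrees $5,6,7,8$, will appear — and check case by case (using character tables or explicit generators) that these are exactly the primitive groups of generator index $2$ that are not $A_n$ or $S_n$. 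The main obstacle is precisely this step: making the "small support forces small degree" heuristic into a clean finite check, i.e. extracting from the literature (Jordan's theorem on primitive groups with elements of small support, or the tables of primitive groups of small degree) exactly the list in (4) and no more.

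Finally I would treat the imprimitive case by induction on a minimal block system $\mathcal{B}$ with $b$ blocks of size $m$, $bm = n$. The action of $G$ on $\mathcal{B}$ is transitive of degree $b$; a generating set of $G$ of index $\le 2$ in degree $n = bm$ projects to a generating set of $G^{\mathcal{B}} \le S_b$ all of whose elements have support of size $\le 4/m$ on the blocks (since an element moving $k$ blocks nontrivially moves at least $2k$ points, so $2k \le \operatorname{ind}(\sigma)+1 \le 3$ forces $k \le 1$ when $m \ge 2$; more carefully, index $\le 2$ means at most $4$ points moved). This forces $m = 2$ and $G^{\mathcal{B}}$ generated by transpositions, hence $G^{\mathcal{B}} = S_b$; moreover the kernel $K$ of $G \to S_b$ must be nontrivial on at least one block to glue things together, and a short argument on the allowed inertia generators (a double-transposition either swaps the two points in two different blocks, or inverts one block and fixes another, etc.) shows $K$ is all of $C_2^b$ or its index-$2$ "even" subgroup, landing in case (2) or (3). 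The cleanest way to organize this is: show $G \le C_2 \wr S_b$, show the projection is onto $S_b$, then show the intersection with the base group $C_2^b$ is either everything or the kernel of the product map, using that $G$ is generated by index-$\le 2$ elements and transitive. I expect the block-size argument ($m = 2$) and the projection-onto-$S_b$ argument to be routine, with the only delicate point being to rule out "mixed" generating sets where $G$ is imprimitive but some generators have support spread across three or four blocks in a way that still sums to index $2$ — a finite local check at each generator resolves this.
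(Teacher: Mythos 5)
Your overall strategy --- split into primitive and imprimitive cases, defer the primitive double-transposition case to the known classification, and in the imprimitive case force block size $2$ and a projection onto the full symmetric group on the blocks --- is essentially the strategy of the paper. However, your opening ``key dichotomy'' is false as stated, and this is not cosmetic. A transitive group whose generating set \emph{contains} a transposition is not forced to be $S_n$; the connectedness-of-the-transposition-graph argument applies only to groups \emph{generated} by transpositions. The group $C_2\wr S_n$ of case (2) is transitive, contains transpositions, and in fact \emph{needs} a transposition in any generating set attaining generator index $2$: its double transpositions are even permutations and therefore generate at most the index-$2$ subgroup $(C_2\wr S_n)\cap A_{2n}$. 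So if you follow your dichotomy literally you either wrongly conclude $G=S_{2n}$ in this situation or you discard case (2) at the outset. The paper keeps both transpositions and double transpositions in play throughout the imprimitive analysis, and only rules out transpositions for \emph{proper} subgroups of $C_2\wr S_n$ (a transposition lies in one coordinate $C_2$ of the base group, and transitivity on the blocks then forces the whole of $C_2^n$ into $G$).

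The second, more substantive gap is the step you dismiss as ``a short argument \ldots shows $K$ is all of $C_2^b$ or its index-$2$ even subgroup''. This is precisely the delicate point of the imprimitive case, and you leave it as an assertion. The paper's argument runs as follows: if $G$ is a proper subgroup of $(C_2\wr S_n)\cap A_{2n}$ surjecting onto $S_n$, then $G\cap C_2^n$ is an $S_n$-submodule of the permutation module $\mathbb{F}_2^n$ of dimension strictly less than $n-1$; by the known submodule lattice of $\mathbb{F}_2^n$ the only candidate is the one-dimensional diagonal submodule, whence $G$ contains a complement isomorphic to $S_n$ and $G\cong C_2\times S_n$; but then any generating set must contain an element projecting onto the generator of the diagonal $C_2$, and such an element is fixed-point-free of index at least $n\ge 3$, a contradiction. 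Without an argument of this kind you have not excluded, for example, the degree-$2n$ diagonal subgroup $C_2\times S_n$, which is transitive, imprimitive with blocks of size $2$, and surjects onto $S_n$. (Your local description of how a double transposition can sit relative to the blocks is a reasonable starting point, but it does not by itself pin down the intersection with the base group.) The primitive case of your sketch is fine and matches the paper, which likewise reduces to the classification of primitive groups generated by double transpositions and cites Kondo for the list in case (4).
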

\begin{proof}
$S_n$ is the only transitive group generated by transpositions, so we may assume without loss that $gi(G) = 2$. Assume first that $G$ acts imprimitively on $n$ blocks of size $d$. Firstly, the block size $d$ must be $2$, since any generating set of $G$ must contain an element permuting at least $2$ blocks, and such an element has to move at least $2d$ points. Next, such an element can only be of index $\le 2$ if it is a double transposition, and then (since it switches exactly two blocks) its image in the action on the $n$ blocks must be a transposition. Therefore the image of $G$ in the blocks action is generated by transpositions, i.e., must be the full symmetric group $S_n$. So $G\le C_2\wr S_n$. Note that this group does not contain $3$-cycles, so $G$ must be generated by double transpositions, and possibly transpositions. This is of course the case for the full wreath product $C_2\wr S_n$, since the socle $C_2^n$ is generated by transpositions, and the complement $S_n\le C_2\wr S_n$ by double transpositions.
 Assume now that $G$ is smaller than the full wreath product $C_2\wr S_n$. Then $G$ cannot contain any transpositions, since any transposition generates a component $C_2\le C_2^n$, and transitivity in the action on the blocks would imply $C_2^n\le G$. 
 So $G\le (C_2\wr S_n) \cap A_{2n}$ is generated by double transpositions. If $G$ were strictly smaller than $(C_2\wr S_n) \cap A_{2n}$, then its intersection with $C_2^n$ would be a non-trivial submodule of the permutation module $C_2^n$ of dimension strictly smaller than $n-1$. There is only one such module, namely the diagonal one of dimension $1$ (see e.g. \cite{Mortimer}). Now $G$ contains a set of double transpositions which, when projected onto $S_n$, generate all of $S_n$. These elements are then necessarily of the form $(1,x)\in (C_2)^n\rtimes S_n$, whence $G$ contains a complement $\cong S_n$ to $C_2$, i.e., $G\cong C_2\times S_n$, acting on cosets of $\{1\}\times S_{n-1}$. But then any generating set of $G$ must contain an element projecting onto the generator of $C_2$, and such an element is  necessarily fixed point free, i.e., of index at least $n\ge 3$, a contradiction.
 
 So assume now that $G$ is a primitive group. It is well-known that, if $G$ contains a transposition, then it must be the full symmetric group, and if it contains a $3$-cycle, then it must contain the alternating group. We may therefore assume that $G$ is generated by double transpositions. The fact that any non-alternating primitive group with this property is contained in case (4) is shown e.g. in \cite{Kondo97}.
 \end{proof}

Using Proposition \ref{prop:gen_index2}, we provide many extensions with cubefree discriminant, for each group which can possibly occur as the Galois group of (the Galois closure of) such an extension:

\begin{theorem}
\label{thm:cubefree}
Let $G$ be a transitive permutation group of degree $n$ and of generator index $g\le 2$, and let $S$ be a finite set of prime numbers. Then there exist infinitely many $G$-extensions $F/\mathbb{Q}$ of degree $n$ such that
\begin{itemize}
\item[i)] $\Delta(F)$ is $(g+1)$-free.
\item[ii)] $F/\mathbb{Q}$ is unramified at all primes in $S$.
\end{itemize}
More precisely, there exists a positive constant $\alpha:=\alpha(G)$ such that for all sufficiently large $N\in \mathbb{N}$, the set of all $G$-extensions $F/\mathbb{Q}$ with properties i) and ii), and with $|\Delta(F)|\le N$, is of cardinality at least $N^{\alpha}$.
\end{theorem}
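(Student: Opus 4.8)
The plan is to treat the four cases of Proposition~\ref{prop:gen_index2} separately, reducing each one to a realization-theoretic statement over $\mathbb{Q}(t)$ that can be fed into the specialization criterion (Theorem~\ref{thm:chevalley_weil}) to produce infinitely many number field specializations with the prescribed inertia behaviour and the required lower bound on the counting function. For case (1), when $G=A_n$ or $S_n$ in the natural action, one uses classical results: there are well-known one-parameter families of $S_n$-covers of $\mathbb{P}^1_{\mathbb{Q}}$ (e.g.\ trinomial families) ramified only at points over which the inertia is generated by a single transposition, so that the tame specializations have squarefree discriminant; for $A_n$ one argues similarly with inertia generated by a $3$-cycle (index $2$). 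Hilbert irreducibility together with a Chebotarev/Malle-type counting argument, or more directly the effective bound packaged into Theorem~\ref{thm:chevalley_weil}, then gives the $N^{\alpha}$ lower bound with $\alpha$ depending on the degree of the cover. Avoiding the primes of $S$ is arranged by imposing finitely many congruence conditions on the specialization parameter, which only changes $\alpha$ by a constant factor.

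For cases (2) and (3), the wreath product $C_2\wr S_n$ and its index-$2$ subgroup, the idea is to realize these as composita built from a single "bottom" $S_n$-cover and a quadratic "top" twist, exactly the construction flagged in the introduction via Lemma~\ref{lem:wreath}. Concretely, start from an $S_n$-extension $L/\mathbb{Q}(t)$ with inertia generated by transpositions, adjoin a square root of a suitable element of $L$ chosen so that the resulting degree-$2n$ extension has Galois group the full wreath product (resp.\ its intersection with $A_{2n}$) and so that every inertia group is generated by a double transposition (resp.\ by a transposition or double transposition), hence of index $\le 2$; controlling the ramification of the quadratic step is where the "careful construction of composition of covers with prescribed ramification types" does its work. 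Specializing via Theorem~\ref{thm:chevalley_weil} and again imposing congruences away from $S$ yields the number fields, with the counting lower bound inherited from the bottom cover.

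Case (4) is a finite list --- $D_5$, $PSL_2(5)$, $PSL_3(2)$, $AGL_3(2)$ --- and here one either exhibits explicit polynomials or cites known generic/parametric families with the right tame inertia (inertia generated by a double transposition or, for $D_5$, a reflection, which has index $2$ in degree $5$), then applies the same specialization-plus-congruence machinery; since $G$ is fixed one gets $\alpha$ from a single fixed cover. The main obstacle I expect is in cases (2) and (3): ensuring simultaneously that (a) the quadratic extension of $L$ is geometrically correct so that the global Galois group is the \emph{full} wreath product and not a proper subgroup, (b) no new wild ramification or large-index inertia is introduced at the finitely many branch points of the quadratic step, and (c) the branch locus avoids the specialization values that would otherwise be forced to ramify --- this compatibility of ramification types is precisely the delicate bookkeeping isolated in Lemmas~\ref{lem:wreath}, \ref{a2n_intersect}, and \ref{lem:last}, and it is what makes the effective criterion of Theorem~\ref{thm:chevalley_weil} (rather than a soft Hilbert irreducibility argument) necessary to extract the quantitative bound.
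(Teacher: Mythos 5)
Your overall architecture matches the paper's: the case division follows Proposition~\ref{prop:gen_index2}, each case is reduced to a $\qq$-regular realization over $\qq(t)$ with prescribed inertia, and the number fields and the $N^{\alpha}$ bound come from specialization (Theorem~\ref{thm:chevalley_weil}, with the counting supplied by Theorem~\ref{cor:count} via a Dvornicich--Zannier type argument, not by Theorem~\ref{thm:chevalley_weil} alone). However, your inertia-type assignment in cases (2) and (3) is swapped, and as written both constructions fail. Over $\qq$, a regular extension satisfying (*) has Galois group generated by its inertia generators (Remark~\ref{rem:monodromy}), so a cover all of whose inertia groups are double transpositions generates at most $(C_2\wr S_n)\cap A_{2n}$ and can never be the full wreath product $C_2\wr S_n$; the latter requires transpositions \emph{and} double transpositions among the inertia generators, since the socle $C_2^n$ is generated by the transpositions it contains. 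Conversely, $(C_2\wr S_n)\cap A_{2n}$ contains no transpositions at all, so ``transposition or double transposition'' inertia is impossible for that group. The paper handles case (3) not by a direct quadratic twist with mixed inertia, but by first constructing the full wreath product cover (with both inertia types) and then passing to the fixed field of the index-$2$ subgroup, which is again a rational function field precisely because exactly two branch points carry inertia outside $A_{2n}$ (Lemma~\ref{a2n_intersect}); without that rationality one cannot specialize over $\qq$.

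The second gap is that ``imposing finitely many congruence conditions to avoid $S$'' presupposes condition (*): for every prime $p$ there must exist \emph{some} specialization unramified at $p$, and this can fail (regular extensions may have universally ramified primes). Verifying (*) is the real content of Lemmas~\ref{lem:sn}--\ref{a2n_intersect}: the bottom polynomial $f$ is chosen completely split over $\qq$ precisely so that the fiber over $t=0$ (resp.\ $u=0$) has trivial or controlled residue extension, and in the wreath product case one compares the residue field at $u\mapsto\infty$ (which is $F(\zeta_{en})$, independent of $f$) with the one at $u\mapsto 0$ (a compositum of specializations of the bottom cover, which Theorem~\ref{thm:chevalley_weil} applied to that cover lets one make unramified at the finitely many dangerous primes). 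Your trinomial suggestion for $S_n$ would also require such a check, which is why the paper works with split $f$ rather than trinomials. Your case (1) for $A_n$ and case (4) otherwise agree with the paper (Mestre's construction, resp.\ explicit families from the literature).
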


The proof of Theorem \ref{thm:cubefree} is contained in Section \ref{sec:proof}. As for the ``true" exponent $\alpha$ in Theorem \ref{thm:cubefree}, we conjecture it to be $=1/2$ or $=1$, depending on whether $G$ is or is not contained in $A_n$.

\section{Some prerequisites}
{\textit{Function field extensions}}. We briefly review some basic notions concerning specialization of function field extensions, which will feature in the following sections.

Let $k$ be a field of characteristic zero, and let $E/k(t)$ be a degree-$n$ extension.
Recall that $E/k(t)$ is called $k$-regular if $E\cap \overline{k} = k$. For any transitive group $G\le S_n$, equivalence classes of $k$-regular degree-$n$ $G$-extensions $E/k(t)$ correspond one-to-one to equivalence classes of 
connected degree-$n$ covers $f:X\to \mathbb{P}^1$, defined over $k$ and with Galois group (of the Galois closure) permutation-isomorphic to $G$.
Given a  Galois cover $E/k(t)$ and a $k$-rational place $t\mapsto t_0\in \mathbb{P}^1(k)$, we denote by $E_{t\mapsto t_0}$ (or simply $E_{t_0}$) the residue extension at any place extending $t\mapsto t_0$ in $E$. This is independent of the choice of extending place, since $E/k(t)$ is Galois. We explicitly also allow residue extensions at branch points of $E/k(t)$.

{\textit{Residue extensions and completions}}, Throughout, we use some basic and well-known facts about residue extensions and completions in number field and function field extensions. We refer to \cite{Serre} for details. Let us only point out the following well-known fact: Let $E/k(t)$ be a Galois extension over a number field $k$, with ramification index $e\ge 1$ at a $k$-rational place $t\mapsto t_0$, and set $s:=t-t_0$. Then the completion $E\cdot k((s))$ of $E$ at any place extending $t\mapsto t_0$ is of the form $E_{t\mapsto t_0}(((\alpha s)^{1/e}))$, with some $\alpha\in E_{t\mapsto t_0}$.

{\textit{Ramification in specializations}}. In Section \ref{sec:crit}, we investigate specializations $E_{t_0}/\qq$ with $k$-free discriminant by comparing ramification in specializations with ramification in $E/\qq(t)$. A well-known theorem in this direction was given by Beckmann in \cite[Proposition 4.2]{Beckmann}. We review it here, and give a strong version in Theorem \ref{thm:chevalley_weil} below. The theorem remains true over number fields, but most of our applications will be over $\qq$, in which case the statement becomes particularly nice.

\begin{proposition}
\label{prop:beckmann}
Let $E/\mathbb{Q}(t)$ be a $\qq$-regular Galois extension with group $G$, branch points $t_1,...,t_r\in \mathbb{P}^1(\overline{\qq})$ and associated inertia group generators $\sigma_1,...,\sigma_r\in G$. 
For $i=1,\dots,r$, denote by $\mu_{t_i}(X,Y)\in \zz[X,Y]$ the homogenization of the irreducible polynomial of $t_i$ in $\zz[X]$.\footnote{With $\mu_{\infty}$ defined as $Y$.}
Then there exists a finite set $S_0$ of prime numbers (``bad primes" of $E/\mathbb{Q}(t)$), such that for all $p\in \mathbb{P}\setminus S_0$, the following holds:\\
For any $t_0 =[a:b] \in \mathbb{P}^1(\qq)\setminus\{t_1,\dots t_r\}$ (with $a,b\in \zz$ not both $=0$), $p$ ramifies in $E_{t_0}/\qq$ if and only if there exists a branch point $t_i$ (automatically unique up to algebraic conjugates, via appropriate choice of $S_0$) such that $\mu_{t_i}(a,b)\in \zz$ is of $p$-adic valuation $\nu>0$, not divisible by $| \langle \sigma_i\rangle |$. Furthermore, in this case the inertia group at $p$ in $E_{t_0}/\qq$ is conjugate in $G$ to the subgroup of $\langle\sigma_i\rangle$ of index $gcd(\nu,|\langle\sigma_i\rangle |)$.
\end{proposition}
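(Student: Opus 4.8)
The plan is to prove Proposition~\ref{prop:beckmann} by reducing, after discarding a suitable finite set $S_0$ of bad primes, to an explicit local computation at the single branch point lying ``$p$-adically close'' to the specialization point $t_0$. First I would fix good reduction data: choose $S_0$ to contain all primes dividing the leading coefficients and discriminants of the $\mu_{t_i}$, all primes below which two distinct branch points collide or below which a branch point meets $\infty$, all primes dividing $|G|$ (so that ramification in $E/\qq(t)$ and in all relevant residue extensions is tame and controlled), and all primes of bad reduction of a fixed model of the cover $f\colon X\to\mathbb{P}^1$ over $\zz[1/M]$ for an appropriate $M$. For $p\notin S_0$ the cover has good reduction mod $p$, and the reduction of the branch locus is the reduction of $\{t_1,\dots,t_r\}$, with the reduced branch points pairwise distinct and distinct from the reduction of $t_0$ unless exactly one $\mu_{t_i}(a,b)$ is divisible by $p$.

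The core of the argument is then local at such a $p$. Writing $t_0=[a:b]$ with $p\nmid b$ (the case $p\mid b$, i.e.\ $t_0$ reducing to $\infty$, is symmetric), set $s:=t-a/b$, so that $s$ is a uniformizer at $t_0$ and the completion of $\qq(t)$ at $t_0$ is $\qq((s))$. By the fact recalled in the ``Residue extensions and completions'' paragraph, I want to understand the completion of $E$ at a place above $t_0$. If no branch point reduces to $t_0$ mod $p$, then by good reduction the cover is unramified there and $E_{t_0}\otimes\qq_p$ is an unramified $\qq_p$-algebra, so $p$ is unramified in $E_{t_0}$. If the branch point $t_i$ reduces to $t_0$ mod $p$ (with $t_i$ unique up to conjugacy by our choice of $S_0$), I would base change to the completion at $t_i$: near $t_i$ the cover looks like $u\mapsto u^{e_i}$ with $e_i=|\langle\sigma_i\rangle|$ and inertia generator $\sigma_i$, and $\mu_{t_i}(a,b)$ (up to a $p$-adic unit coming from the leading coefficient, which lies outside $S_0$) equals, up to units, the value of the local parameter at $t_i$ evaluated at $t_0$ --- i.e.\ $\nu=v_p(\mu_{t_i}(a,b))$ is the ``$p$-adic distance'' from $t_0$ to $t_i$. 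Then the completion of $E$ at the chosen place over $t_0$ becomes, after the standard Kummer description, $\qq_p((s))[(\text{unit}\cdot s \cdot(\text{stuff}))^{1/e_i}]$ whose ramification over $\qq_p$ is governed by $\gcd(\nu,e_i)$: the extension is unramified iff $e_i\mid\nu$, and otherwise totally ramified of degree $e_i/\gcd(\nu,e_i)$ with inertia group the unique subgroup of $\langle\sigma_i\rangle$ of that order, i.e.\ of index $\gcd(\nu,e_i)$ in $\langle\sigma_i\rangle$. This is exactly the claimed statement, and the identification of the inertia subgroup up to $G$-conjugacy follows because all places of $E$ above $t_0$ are $G$-conjugate.

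Concretely I expect to organize the local step as follows: pick a $\qq$-rational (or at worst unramified) approximation and use that, for $p\notin S_0$, $E/\qq(t)$ has a smooth model over $\zz_p$ and the branch divisor is étale over $\zz_p$ away from where two branch points or a branch point and $t_0$ collide; then the deformation-to-the-normal-cone / Abhyankar-type local picture gives that the $p$-adic completion of the normalization of $\zz_p[[s]]$ in $E$ is a product of copies of $\zz_p[[s]][z]/(z^{e_i}-c s^{\nu})$ for a unit $c$, up to an unramified twist accounting for the residue field extension $E_{t_i}/\qq$ (which is itself unramified at $p$ since $p\notin S_0$). Taking valuations gives the dichotomy on $e_i\mid\nu$ and the index formula.

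The main obstacle I anticipate is making the ``unramified twist'' bookkeeping fully rigorous: the completion of $E$ at $t_i$ is $E_{t_i}(((\alpha s)^{1/e_i}))$ with $\alpha\in E_{t_i}$, and one must check that for $p\notin S_0$ the element $\alpha$ and the residue extension $E_{t_i}/\qq$ contribute only an unramified extension of $\qq_p$, so that the wild/tame ramification of $E_{t_0}$ at $p$ comes purely from the Kummer exponent $\nu$ modulo $e_i$; this is where one genuinely uses that $p$ is large (in particular $p\nmid |G|$ and $p$ is of good reduction), and getting the finite set $S_0$ to depend only on $E/\qq(t)$ and not on $t_0$ is the delicate point. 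A secondary technical nuisance is the homogenization/point-at-infinity conventions (the footnote $\mu_\infty=Y$ and the normalization of leading coefficients), which must be handled so that $v_p(\mu_{t_i}(a,b))$ really is the intersection multiplicity of the sections $t_0$ and $t_i$ at $p$; I would dispatch this by absorbing the relevant leading coefficients into $S_0$ and checking the two charts $p\nmid b$ and $p\mid b$ separately.
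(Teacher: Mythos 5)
The paper does not actually prove this proposition: it is quoted (in the form needed later) from Beckmann's article, cited as \cite{Beckmann}, Proposition 4.2, so there is no in-paper proof to compare against. Your sketch follows what is essentially Beckmann's original strategy --- discard a finite set $S_0$ of primes so that a fixed normal model of the cover has good reduction, $p\nmid |G|$, the leading coefficients and discriminants of the $\mu_{t_i}$ are $p$-adic units, and distinct (conjugates of) branch points do not collide modulo $p$; then reduce to a local Kummer computation at the unique branch-point orbit meeting $t_0$ modulo $p$ --- and this outline is sound. Two points would need care in a full write-up. First, the displayed local model $\zz_p[[s]][z]/(z^{e_i}-cs^{\nu})$ is not correct as stated: the cover is ramified along $t=t_i$, not along $t=t_0$, so in the coordinate $s=t-t_0$ the local equation reads $z^{e_i}=c\,(s+\delta)$ with $v_p(\delta)=\nu$, and only after specializing $s\mapsto 0$ does one obtain $z^{e_i}=c\delta$ and read off the tame ramification degree $e_i/\gcd(\nu,e_i)$, hence the inertia subgroup of index $\gcd(\nu,e_i)$ in $\langle\sigma_i\rangle$. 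Second, the ``unramified twist'' you flag --- that $E_{t_i}/\qq$ is unramified at $p$ and that the constant $\alpha$ in the completion $E_{t_i}(((\alpha s_i)^{1/e_i}))$ contributes only a $p$-adic unit, uniformly in $t_0$ --- is precisely where the finiteness and $t_0$-independence of $S_0$ must be established via the integral model; you correctly identify this as the delicate step but do not carry it out. As a blind reconstruction of the standard argument behind the cited result, the proposal is faithful and contains no wrong idea, only these sketch-level imprecisions.
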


\section{Specializations with $k$-free discriminant: a general criterion}
\label{sec:crit}
In Theorem \ref{thm:chevalley_weil} below, we give a strong version of Beckmann's criterion (Proposition \ref{prop:beckmann}), relating inertia groups in a given $\qq$-regular extension with those in its specializations, under a mild extra assumption. For our applications, we cannot allow any exceptional primes as in the statement of Proposition \ref{prop:beckmann}. We therefore make the following assumption, where $E/k(T)$ is a finite $k$-regular Galois extension over a number field $k$.

 (*) For each finite prime $p$ of $k$, there exists a point $t_0(p)\in \mathbb{P}^1(k)$ (possibly a branch point!) such that 
 $p$ does not divide the ramification index at $t_0$ and
 the residue extension $E_{t_0(p)}/k$ is unramified at $p$. 
 
\begin{theorem}
\label{thm:chevalley_weil}
Let $k$ be a number field, $S$ be a finite set of primes of $k$ and $E/k(t)$ be a finite Galois extension fulfilling condition (*), with branch points $t_1,..,t_r\in \mathbb{P}^1(\overline{k})$ and associated inertia group generators $\sigma_1,...,\sigma_r$. 
Denote by $\mathcal{S}(E,\{\sigma_1,...,\sigma_r\},S)$ the set of all $t_0\in k\setminus \{t_1,...,t_r\}$ such that  $E_{t_0}/k$ is unramified in $S$ and all its inertia groups are conjugate to a subgroup of some $\langle\sigma_i\rangle$, $i\in \{1,...,r\}$.
Then \begin{itemize}
\item[a)] there exists a finite set $S_0$ of primes of $k$ (depending only on $E/k(t)$) such that $\mathcal{S}(E,\{\sigma_1,...,\sigma_r\}, S\cup S_0)$ contains a non-empty and $(S\cup S_0)$-adically open subset of $\mathbb{P}^1(k)$.
\item[b)] If additionally $t_1$ is $k$-rational, then $\mathcal{S}(E,\{\sigma_2,...,\sigma_r\}, S\cup S_0)$ is infinite.
\end{itemize}
\end{theorem}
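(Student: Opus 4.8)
The plan is to run a local-global / approximation argument on $\mathbb{P}^1(k)$, using Proposition \ref{prop:beckmann} as a black box away from the finite set of bad primes $S_0$ it produces, and using condition (*) to handle the primes of $S_0$ (together with those of $S$) by hand. First I would fix the finite set $S_0$ of bad primes of $E/k(t)$ supplied by Beckmann's criterion, and enlarge it harmlessly so that it also contains all primes dividing the ramification indices $e_i=|\langle\sigma_i\rangle|$ and all primes of bad reduction of the branch locus; call the result $S_0$ again. For $p\notin S\cup S_0$, Proposition \ref{prop:beckmann} tells us exactly when $p$ ramifies in $E_{t_0}/k$ and, crucially, that when it does the inertia group is (conjugate to) a subgroup of $\langle\sigma_i\rangle$ for the unique relevant branch point $t_i$. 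Hence for such $p$ the membership condition ``all inertia groups of $E_{t_0}/k$ are conjugate into some $\langle\sigma_i\rangle$'' is \emph{automatic}: every $t_0\in\mathbb{P}^1(k)\setminus\{t_1,\dots,t_r\}$ already satisfies it at all primes outside $S\cup S_0$. So the only real constraint is at the finitely many primes $p\in S\cup S_0$, where we must force $E_{t_0}/k$ to be unramified.

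For each $p\in S\cup S_0$, condition (*) hands us a point $t_0(p)\in\mathbb{P}^1(k)$ such that $E_{t_0(p)}/k$ is unramified at $p$ and $p\nmid e(t_0(p))$. The key lemma I would invoke (a standard consequence of Krasner's lemma / continuity of specialization, see the ``residue extensions and completions'' paragraph) is that the local behaviour of $E_{t_0}/k$ at $p$ is locally constant in $t_0$ for the $p$-adic topology: there is a $p$-adic neighbourhood $U_p\subseteq\mathbb{P}^1(k_p)$ of $t_0(p)$ such that every $t_0\in U_p$ gives an $E_{t_0}/k$ which is $p$-unramified. (If $t_0(p)$ is itself a branch point, one uses the description of the completion as $E_{t_i}((( \alpha s)^{1/e})))$ and the fact that $p\nmid e$ to see the residue extension at nearby rational points stays $p$-unramified; this is the only slightly delicate point and is where condition (*)'s requirement $p\nmid e$ is used.) By weak approximation on $\mathbb{P}^1$, the intersection $\bigcap_{p\in S\cup S_0}U_p$ is a non-empty $(S\cup S_0)$-adically open subset of $\mathbb{P}^1(k)$; removing the finitely many branch points keeps it open and non-empty. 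Any $t_0$ in this set lies in $\mathcal{S}(E,\{\sigma_1,\dots,\sigma_r\},S\cup S_0)$, which proves part a).

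For part b), when $t_1$ is $k$-rational we want to additionally kill the contribution of the branch point $t_1$ itself, i.e.\ ensure $t_1$ never becomes a genuinely ramified prime-producing point in the list of allowed inertia. Translating coordinates we may assume $t_1=0$, so $\mu_{t_1}(a,b)=a$ (up to a unit) for $t_0=[a:b]$. The idea is to restrict to $t_0=1/m$ for suitable integers $m$: then $\mu_{t_1}$ evaluates to a unit at every prime not dividing $b=m$, so $t_1$ contributes no ramification at all at those primes, and for the remaining branch points $t_2,\dots,t_r$ Beckmann's criterion still controls the inertia as subgroups of $\langle\sigma_i\rangle$, $i\ge 2$. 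One then chooses $m$ inside the $(S\cup S_0)$-adic neighbourhood from part a) (possible since that neighbourhood is open and the points $1/m$ are $(S\cup S_0)$-adically dense enough once we also impose congruence conditions making $m$ an $(S\cup S_0)$-adic unit), and simultaneously arranges, by a further congruence/size condition, that $\mu_{t_i}(1,m)$ is never divisible by $e_i$ to a ``bad'' power — or more simply, one uses an infinitude argument: there are infinitely many primes $\ell$ such that picking $m$ with a controlled valuation pattern yields pairwise distinct discriminants, so infinitely many distinct $t_0$ arise. The cleanest route is: the set from part a) is $(S\cup S_0)$-adically open, the rational points of the form $1/m$ with $m$ an $(S\cup S_0)$-unit are dense in it, and for all but finitely many such $m$ the specialization $E_{1/m}/k$ has all inertia subgroups of some $\langle\sigma_i\rangle$, $i\ge 2$; since distinct $m$ give (for suitable choices) non-isomorphic fields, $\mathcal{S}(E,\{\sigma_2,\dots,\sigma_r\},S\cup S_0)$ is infinite.

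**Main obstacle.** The technical heart, and the step I expect to require the most care, is the local-constancy statement at the primes of $S_0$ \emph{when $t_0(p)$ is allowed to be a branch point}: one must show that rational points $p$-adically close to a branch point, but with $p$ coprime to the ramification index there, specialize to $p$-unramified residue extensions. This is exactly what condition (*) is designed to feed into, and it is handled via the explicit completion $E\cdot k((s)) = E_{t_i}(((\alpha s)^{1/e}))$ together with Krasner's lemma, but writing it carefully (uniformity of the neighbourhood, and the interaction with the other branch points) is where the real work lies. For part b), the secondary obstacle is bookkeeping: ensuring that restricting to the sparse family $t_0=1/m$ is still compatible with the open condition from part a) and simultaneously yields infinitely many genuinely distinct number fields.
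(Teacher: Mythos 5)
Your overall architecture matches the paper's: Beckmann's criterion disposes of all primes outside $S\cup S_0$ for free, condition (*) plus $p$-adic continuity handles the finitely many primes of $S\cup S_0$, and weak approximation glues the local neighbourhoods; the non-branch-point case of (*) and the deduction of a) from it are correct. However, there is a genuine gap exactly at the point you yourself flag as the technical heart, and the mechanism you propose there does not work. When $t_0(p)$ is a branch point of index $e>1$, it is \emph{false} that some punctured $p$-adic neighbourhood of $t_0(p)$ consists entirely of values $t_0$ with $E_{t_0}/k$ unramified at $p$: already for $E=\qq(\sqrt{t})$ with branch point $0$ and $p$ odd, condition (*) holds at $t_0(p)=0$, yet $t_0=p$ is as $p$-adically close to $0$ as you like and specializes to $\qq(\sqrt{p})$, which is ramified at $p$. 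Only those $t_0$ whose $p$-adic valuation and leading unit are compatible with the $e$-th power structure of the completion $F(((\alpha t)^{1/e}))$ survive. The paper supplies the missing idea: adjoin $s$ with $s^e=at$ for a suitable $a\in k^\times$ making $\alpha/a$ a $p$-adic unit; by Abhyankar's lemma $E(s)/k(s)$ is unramified at $s\mapsto 0$ with residue extension contained in $F(\sqrt[e]{\alpha/a})$, which is $p$-unramified thanks to (*) and $p\nmid e$; one then applies the (already settled) non-branch-point case to $E(s)/k(s)$ at $s\mapsto 0$ and transports back along $t_0=s_0^e/a$. Plain Krasner continuity around the branch point itself cannot deliver this.

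There is a second, smaller but still real defect in your part b). With $t_1=0$ and $m$ an $(S\cup S_0)$-unit, every point $1/m$ has valuation $0$ at each $p\in S\cup S_0$, so this family need not meet the open set produced in a) at all: that set is only guaranteed to be a neighbourhood of the points $t_0(p)$, which may well have nonzero $p$-adic valuation. Your density claim for the points $1/m$ is therefore unjustified. The paper instead sends $t_1$ to $\infty$ and exhibits a coset $a+bO_k$ inside the set from a), where $a$ performs the approximation at the primes of $S\cup S_0$ and can be chosen $q$-integral at every $q\notin S\cup S_0$ (possible because the set of admissible non-branch points $t_0(p)$ is $p$-adically open); $q$-integrality then prevents $t_0$ from meeting $\infty$ at $q$, so $\sigma_1$ never occurs as an inertia generator, and the coset is infinite. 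Your underlying idea (force a valuation condition at the good primes that rules out the branch point $t_1$) is the right one, but the rigid family $1/m$ should be replaced by such a coset.
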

\begin{proof} 
Choose $S_0$ as in Proposition \ref{prop:beckmann}.
For each $p\in S\cup S_0$, choose a value $t_0(p)$ as in the assumption. Assume first that $t_0(p)$ is a non-branch point. Then for all $t_0\in k$ which are sufficiently close $p$-adically to $t_0(p)$ (for all $p\in S\cup S_0$), it follows that $E_{t_0}/k$ is unramified inside $S\cup S_0$. This makes use a variant of Krasner's lemma; compare e.g. Lemma 3.5 in \cite{BSS}. By Proposition \ref{prop:beckmann}, any such $t_0$ now fulfills $t_0\in \mathcal{S}(E,\{\sigma_1,...,\sigma_r\}, S\cup S_0)$. Clearly the set of such $t_0$ is $(S\cup S_0)$-adically open, showing a).\\
If $t_1$ is additionally $k$-rational, then apply a fractional linear parameter transformation on $k(t)$ to map $t_1$ to $\infty$.
Then by the above construction $\mathcal{S}(E,\{\sigma_1,...,\sigma_r\},S\cup S_0)$ contains a generalized arithmetic progression $a+bO_k$, with suitable $a\in k$, $b\in O_k$. Note here that $a$ can be assumed $q$-integral\footnote{That is, contained in the localization of $O_k$ at $q$.} for all primes $q\notin S\cup S_0$: indeed, using Krasner's lemma, the set of non-branch points $t_0(p)$ fulfilling (*) is $p$-adically open and thus contains elements which are $q$-integral for all primes $q\ne p$. Since a $q$-integral value $t_0$ cannot intersect infinity at $q$, we have $a+bO_k \subset \mathcal{S}(E,\{\sigma_2,...,\sigma_r\}, S\cup S_0)$, showing b).

Finally, assume that $t_0(p)$ as above is a $k$-rational branch point of ramification index $e>1$, without loss $t_0(p) = 0$. Let $F/k$ be the residue extension at $t\mapsto 0$.
Then the completion at $t\mapsto 0$ equals $F((\sqrt[e]{\alpha t}))$ for some $\alpha\in F$ (note that automatically, $\zeta_e\in F$).  By assumption, $p$ does not ramify in $F/k$. Now let $s$ be a root of $X^e-at$, with $a\in k^\times$ to be determined. Then the  completion at $t\mapsto 0$ of the Galois closure $k(\zeta_e)(s)/k(t)$ equals $k(\zeta_e)((\sqrt[e]{at}))$. In particular, the composite of both completions is $F((t))(\sqrt[e]{at},\sqrt[e]{\alpha/a}) = F(\sqrt[e]{\alpha/a})((s))$, and thus the residue extension of $E(s)/k(s)$ at $s\mapsto 0$ is contained in $F(\sqrt[e]{\alpha/a})$ (note here that $E(s)/k(s)$ is unramified at $s\mapsto 0$ due to Abhyankar's lemma).
It suffices to ensure that $p$ does not ramify in $F(\sqrt[e]{\alpha/a})/k$; since $F/k$ is already unramified at $p$ 
and $p$ does not divide $e$ by assumption, 
a sufficient condition for this is that $\alpha/a$ is of $p$-adic valuation $0$. 



But of course, the $p$-adic valuation of $\alpha$ is an integer (since $k(\alpha)/k$ is unramified at $p$), so this can be achieved with $a\in k^\times$. Since $E(s)/k(s)$ is unramified at $0$, the above arguments show that there exists a $p$-adically open neighborhood of $s\mapsto s_0$ around $0$ whose residue extensions in $E(s)/k(s)$ are also unramified at $p$. The same must then hold for the corresponding specializations $E_{t\mapsto s_0^e/a}$, where $t_0=s_0^e/a$ can be assumed to be a non-branch point of $E/k(t)$ without loss.
%
We have therefore reduced the problem to the case of non-branch points $t_0(p)$, which has already been treated. 
\end{proof}

Theorem \ref{thm:chevalley_weil} arose essentially from joint work in preparation of the author with Neftin and Sonn (\cite{KoeNS}) on unramified extensions of number fields. We also refer to this source for more precise conjectures concerning field counting results in the spirit of the next theorem below.
A special case of our criterion (namely, under the stronger assumption of existence of a completely split fiber) occurs as the ``Absolute Chevalley-Weil theorem" in \cite{Bilu}.
Our assumption (*) occurs similarly (as the case $U \subset \{\infty\}$) in \cite[Theorem 1.4]{BSS}, although with a different goal in mind (namely, minimizing the {\textit{number}} of ramified primes in specializations), and without the possibility of residue fields at branch points in condition (*).
We deliberately included the case of residue extensions at branch points, even though through the proof of Theorem \ref{thm:chevalley_weil} it reduces to the case of non-branch points. The advantage is that in many applications, residue fields at branch points are particularly easy to recognize. E.g., if some non-Galois cover of degree $n$ comes with a totally ramified $k$-rational branch point, then its residue extension in the Galois closure is exactly $k(\zeta_n)/k$. \\
Note finally that in the case $k=\qq$, the technical condition in (*) that $p$ should not divide the ramification index $e$ at $t_0$ is relevant only for $p=2$ (since any odd prime dividing $e$ would necessarily ramify in $\qq(\zeta_e)\subset E_{t_0}$). Even in this case, the condition could be somewhat relaxed; however, we prefer not to complicate the condition, which is sufficient for all our applications.

\begin{remark}
\label{rem:monodromy}
It is worth pointing out the following convenient fact: If $k=\mathbb{Q}$, then the Galois group of an extension $E/k(t)$ as in Theorem \ref{thm:chevalley_weil} is the group generated by $\sigma_1,...,\sigma_n$. This is true in general only after constant extension from 
$\mathbb{Q}$ to $\overline{\qq}$; however, the non-existence of finite primes ramifying in all specializations automatically implies that $E$ is regular over $\mathbb{Q}$ (i.e., $\overline{\mathbb{Q}}\cap E = \qq$). This observation often enables an entirely combinatorial computation of the Galois group of $E/\qq(t)$ (e.g., in terms of cycle structures).
\end{remark}

Combining Theorem \ref{thm:chevalley_weil} with field counting results, we obtain the following strengthening: 
\begin{theorem}
\label{cor:count}
Let $G\le S_n$, and assume that there exists a degree-$n$ $G$-extension $E/\qq(t)$, with branch points $t_1,...,t_r$, associated inertia group generators $\sigma_1,...,\sigma_r$, and whose Galois closure fulfills condition (*). 
For $B\in \mathbb{N}$ and $S\subset \mathbb{P}$ finite, let $N(B,G,\{\sigma_1,...,\sigma_r\},S)$ denote the number of degree-$n$ $G$-extensions of $\qq$ with discriminant of absolute value $\le B$,
with all inertia groups contained in a conjugate of one of $\langle\sigma_1\rangle,..., \langle \sigma_r\rangle$, and unramified in $S$.\\
Then \begin{itemize}
\item[a)] there exists $\alpha:=\alpha(f)>0$, depending only on $G$ (and not on $S$) such that 
$$N(B,G,\{\sigma_1,...,\sigma_r\},S) \gg_B B^\alpha.$$
More precisely, this holds for any $\alpha< (\sum_{i=1}^r ind(\sigma_i))^{-1}$.
\item[b)] If $t_1$ is rational, then 
$$N(B,G,\{\sigma_2,...,\sigma_r\},S) \gg_B B^\alpha,$$
for any $\alpha< (\sum_{i=2}^r ind(\sigma_i))^{-1}$.
\end{itemize}
Moreover, these inequalities can be fulfilled by only counting specializations of $E/\qq(t)$.
\end{theorem}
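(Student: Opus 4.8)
The plan is to deduce Theorem \ref{cor:count} from Theorem \ref{thm:chevalley_weil} by counting specializations $E_{t_0}/\qq$ at rational points $t_0\in\mathcal S(E,\{\sigma_1,\dots,\sigma_r\},S\cup S_0)$ (respectively $\mathcal S(E,\{\sigma_2,\dots,\sigma_r\},S\cup S_0)$ in case b)), parametrized along the open set / arithmetic progression produced there, and then giving an upper bound on the size of the discriminant of such a specialization in terms of the height of $t_0$. The two ingredients I would assemble are: (i) by Theorem \ref{thm:chevalley_weil}, every $t_0$ in the relevant set yields a $G$-extension unramified in $S$ with all inertia groups conjugate into some $\langle\sigma_i\rangle$ (for the index set $\{1,\dots,r\}$ or $\{2,\dots,r\}$); (ii) a height-vs-discriminant estimate showing that if $t_0=a/b$ with $\max(|a|,|b|)\le H$, then $|\Delta(E_{t_0})|\ll H^{\sum_i ind(\sigma_i)}$ (with the sum over the appropriate index set), so that the number of such $t_0$ with $H\le B^{1/\sum ind(\sigma_i)}$ already produces $\gg B^\alpha$ distinct extensions for any $\alpha<(\sum_i ind(\sigma_i))^{-1}$.

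For step (ii), the key is Proposition \ref{prop:beckmann} (Beckmann's criterion) combined with the conductor–discriminant type bound coming from tame ramification: away from the bad primes $S_0$, a prime $p$ ramifying in $E_{t_0}/\qq$ must divide some $\mu_{t_i}(a,b)$, and the exponent of $p$ in $\Delta(E_{t_0})$ is controlled by $ind$ of the corresponding inertia generator. More precisely, for the degree-$n$ field $F=E_{t_0}$ one has (for tame $p\notin S_0$) that $v_p(\Delta(F))\le ind(\sigma_i)\cdot v_p(\mu_{t_i}(a,b))$ whenever $p\mid\mu_{t_i}(a,b)$, and summing over $p$ gives $v(\Delta(F))\le C+\sum_i ind(\sigma_i)\cdot\log|\mu_{t_i}(a,b)|$ up to an $O(1)$ contribution from the finitely many bad primes (whose exponents are bounded independently of $t_0$, since $E/\qq(t)$ has fixed bad reduction data). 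Since each $\mu_{t_i}$ is a fixed homogeneous polynomial, $|\mu_{t_i}(a,b)|\ll H^{\deg\mu_{t_i}}$, but I actually want the exponent $\sum ind(\sigma_i)$ rather than $\sum\deg(\mu_{t_i})\cdot ind(\sigma_i)$; this is exactly the point where one exploits that after the fractional linear transformation sending $t_1\mapsto\infty$ (and in case b) using that the $t_0$ can be taken with bounded denominator), each relevant branch point contributes essentially a \emph{linear} factor in $a$ (or in the numerator), so that $|\Delta(F)|\ll H^{\sum_i ind(\sigma_i)}$ with the sum over the branch points actually hitting the valuation — which is $r$ points in case a) and $r-1$ in case b). I would phrase this carefully using that the branch divisor, as an effective divisor on $\mathbb P^1_\qq$, has degree $\sum_i [\qq(t_i):\qq]$, but only the \emph{index-weighted} count enters the discriminant exponent because distinct algebraic conjugates of a single $t_i$ share the same inertia generator and jointly contribute one factor $\mu_{t_i}$.

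Next I would handle the bookkeeping needed to turn "$\gg B^\alpha$ values of $t_0$" into "$\gg B^\alpha$ distinct fields": the fibers $t_0\mapsto E_{t_0}$ are finite-to-one onto their images (a given number field arises from at most finitely many $t_0$, since $E/\qq(t)$ has bounded degree and the specialization map is algebraic), so losing a bounded factor is harmless for the exponent $\alpha$; and one restricts to $t_0$ in the $(S\cup S_0)$-adically open set (resp. the progression $a+b\zz$) from Theorem \ref{thm:chevalley_weil}, which is a positive-density subset of the box $\{\max(|a|,|b|)\le H\}$, so still $\gg H^2\gg B^{2\alpha'}$ many values, a fortiori $\gg B^\alpha$. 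The final clause "these inequalities can be fulfilled by only counting specializations" is then automatic, since the entire argument only ever produced fields of the form $E_{t_0}$.

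The main obstacle I anticipate is step (ii) — pinning down the discriminant exponent as exactly $\sum ind(\sigma_i)$ and not something larger — for two reasons. First, one must correctly account for the finitely many bad primes in $S_0$: Beckmann's fine statement fails there, but since $E/\qq(t)$ is fixed these primes appear with a bounded exponent in every $\Delta(E_{t_0})$, contributing only an $O(1)$ additive term to $\log|\Delta|$ that does not affect the exponent; I would state this as a lemma. Second, and more delicate, is the reduction to \emph{linear} height dependence: naively $|\mu_{t_i}(a,b)|$ grows like $H^{d_i}$ with $d_i=[\qq(t_i):\qq]$, which would give exponent $\sum d_i\cdot ind(\sigma_i)$; the improvement to $\sum ind(\sigma_i)$ requires noting that in the height count one should count \emph{pairs} $(a,b)$ with the product $\prod_i|\mu_{t_i}(a,b)|$ bounded, equivalently count lattice points under a curve, which by a standard argument (the $\mu_{t_i}$ being coprime forms, so $\prod\mu_{t_i}$ is a form of degree $\sum d_i$) gives $\gg B^{\alpha}$ values for $\alpha<(\sum d_i)^{-1}$ — but then the discriminant exponent is controlled by $ind$-weighting, and the two effects combine to the clean bound. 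I would need to be careful to present this as: number of $t_0$ with $\prod_i|\mu_{t_i}(a,b)|^{ind(\sigma_i)}\le B$ is $\gg B^{\alpha}$ for $\alpha<(\sum_i ind(\sigma_i))^{-1}$, which is the natural "counting points below a Newton-polygon-type region" estimate and is where the precise exponent in the theorem comes from.
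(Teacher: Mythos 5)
Your overall strategy is the one the paper follows: specialize at points of the set produced by Theorem \ref{thm:chevalley_weil}, bound $|\Delta(E_{t_0})|$ via Beckmann's criterion by the index-weighted product of the $\mu_{t_i}(t_0)$, and count. But there is a genuine gap in your step turning ``many values of $t_0$'' into ``many distinct fields''. Your justification --- that a given number field arises from at most finitely many $t_0$ because $E/\qq(t)$ has bounded degree and specialization is algebraic --- is simply false: for $E=\qq(\sqrt{t})$ the field $\qq(\sqrt{2})$ arises from every $t_0=2m^2$, so fibers of $t_0\mapsto E_{t_0}$ can be infinite. What is needed is a \emph{quantitative} Hilbert irreducibility statement, and this is precisely the nontrivial external input the paper invokes: Theorem 3.1 of \cite{Bilu} (a strong form of a theorem of Dvornicich--Zannier), which guarantees that the specializations at $t_0\in\{1,\dots,B\}$ already produce $\gg B/\log B$ pairwise distinct $G$-extensions. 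Without this (or an equivalent), your argument does not close; and since you count rational $t_0$ of bounded height rather than integers in an arithmetic progression, you would in fact need a two-variable version of such a result, whereas the paper sidesteps this by first mapping the $(S\cup S_0)$-adically open set onto $\zz$ by a linear change of parameter and counting only integer specializations.

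A second, more minor issue: your ``delicate improvement'' from exponent $\sum_i d_i\cdot ind(\sigma_i)$ to $\sum_i ind(\sigma_i)$ rests on a misreading of the indexing. In the theorem $t_1,\dots,t_r$ run over \emph{all} branch points in $\mathbb{P}^1(\overline{\qq})$, with algebraic conjugates counted separately (and conjugate branch points carry conjugate inertia generators, hence equal indices). Thus $\sum_{i=1}^r ind(\sigma_i)$ already equals $\sum d_j\cdot ind(\sigma_j)$ taken over one representative $t_j$ per conjugacy class, which is exactly the degree of $\prod_j \mu_{t_j}^{ind(\sigma_j)}$; the ``naive'' bound $|\Delta(E_{t_0})|\ll |t_0|^{\deg\prod_j\mu_{t_j}^{ind(\sigma_j)}}$ is already the bound asserted, and no Newton-polygon-type lattice count is needed or used. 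Your treatment of the bad primes (bounded exponents contributing $O(1)$ to $\log|\Delta|$) is acceptable, though the paper instead arranges, via Theorem \ref{thm:chevalley_weil}, that the chosen specializations are unramified at all primes of $S_0$.
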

\begin{proof}
Firstly, if $t_1$ is rational, then we may and will assume, via fractional linear transformation, that $t_1=\infty$. 
Furthermore, since $k=\qq$, the $(S\cup S_0)$-adically open set constructed in the proof of Theorem \ref{thm:chevalley_weil} contains an arithmetic progression $a + N\zz$ (with some $a\in \qq, N\in \nn$). Applying a suitable linear transformation in $t$ (fixing infinity), we can map this set onto $\zz$, and may therefore assume that the set of specialization values $t_0$ fulfilling assertion a) (resp., b)) of Theorem \ref{thm:chevalley_weil} contains the integers $\zz$.
We need to estimate the number of distinct $G$-extensions of bounded discriminant in this set of specializations (that this set of specializations does contain many $G$-extensions follows from Hilbert's irreducibility theorem).

By Theorem 3.1 in \cite{Bilu} (which is  a strong version of a theorem due to Dvornicich and Zannier), the set of distinct $G$-extensions arising from specializations of $E/\qq(t)$ at $t_0 \in \{1,...,B\}$ is asymptotically bounded from below by $B/\log(B)$.
At the same time, since all these specializations values $t_0\in \zz$ yield extensions $E_{t_0}/\qq$ unramified at all bad primes $p\in S_0$, any ramified prime $p$ must come from $t_0$ meeting some finite branch point $t_i$ at $p$, which translates to $p$ being a prime divisor of $\mu_{t_i}(t_0)$, with $\mu_{t_i}\in \zz[X]$ the irreducible polynomial of $t_i$ over $\zz$. The absolute value of the discriminant of  $E_{t_0}$ is therefore bounded from above by $(\prod \mu_{t_i}^{ind(\sigma_i)})(t_0)$, where
the product includes one representative for each set of algebraically conjugate finite branch points. Since this polynomial $\prod \mu_{t_i}^{ind(\sigma_i)}\in \zz[X]$ is of degree $\sum_{i=1}^r ind(\sigma_i)$ (resp., $\sum_{i=2}^r ind(\sigma_i)$, if $t_1 = \infty$) we have $|\Delta(\qq(f^{-1}(t_0)))| \le C\cdot B^{\sum ind(\sigma_i)}$ for $|t_0|\le B$. Renaming $C\cdot B^{\sum ind(\sigma_i)}$ as $B$, the assertion follows.
\end{proof}

A few remarks on the exponent $\alpha$ in Theorem \ref{cor:count}:
\begin{remark}
\begin{itemize}
\item[a)] 
By the Riemann-Hurwitz formula, the largest possible value for $\alpha$ is $\frac{1}{n-1}-\epsilon$, which is attained in the case where $E$ is of genus $0$, and $t_1$ is rational and totally ramified. In this case, upon assuming $t_1 = \infty$ without loss, $E$ is the root field of a polynomial $F(X)-t$, with $F\in \mathbb{Q}[X]$. On the other hand, it is known by deep results of Bhargava et al. (\cite{Bhargava}, \cite{Bhargava2}) that the true exponent in the special case of counting $S_n$-number fields with squarefree discriminant is at least $1/2 + 1/n$; and in the case $n\le 5$ actually equal to $1$, implying that a positive density of $S_n$-number fields have squarefree discriminant. This discrepancy is not surprising, since we only count number fields arising as specializations of one given cover.
\item[b)] A similar exponent $\alpha$ for the counting of specializations of a Galois cover (fulfilling additional local conditions) was also obtained in \cite{Debes17}, Theorem 1.1, although depending on the discriminant of defining {\textit{polynomials}} rather than the (in general, smaller) discriminant
of the function field extension itself.
\end{itemize}
\end{remark}

The following direct consequence of the above results is our application to Question \ref{ques:small_disc_strong}. It reduces the problem to a geometric problem, namely finding a cover with suitable ramification data (and with the technical extra condition of non-existence of primes ramifying in all specializations).
\begin{corollary}
\label{cor:appl}
Let $E/\qq(t)$ be a $G$-extension fulfilling Condition (*), $(t_1,...,t_r)$ and $(\sigma_1,...,\sigma_r)$ be as in Theorem \ref{cor:count}, and assume that $\max_{\stackrel{i\in \{1,...,r\}}{t_i \text{ finite}}}ind(\sigma_i) = gi(G)$. Then Question \ref{ques:small_disc_strong} has a positive answer for $G$.
\end{corollary}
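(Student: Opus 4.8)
The plan is to read the statement off Theorem \ref{cor:count} (equivalently Theorem \ref{thm:chevalley_weil}), the only genuinely new ingredient being an elementary inequality for the index of a power of a permutation. Write $e:=gi(G)$; the hypothesis says precisely that $\operatorname{ind}(\sigma_i)\le e$ for every \emph{finite} branch point $t_i$ of $E/\qq(t)$. Given the integer $N$ of Question \ref{ques:small_disc_strong}, I would set $S:=\{p\in\mathbb{P}:p\mid N\}$ and apply Theorem \ref{cor:count} with this $S$ -- more precisely, the construction in its proof, which (after the normalizing affine change of parameter) realizes the counted number fields as specializations $F:=E_{t_0}$ at integers $t_0$, lying in the set $\mathcal{S}(E,\{\sigma_1,\dots,\sigma_r\},S\cup S_0)$, where $S_0$ is the finite, $E$-dependent set of bad primes (which I enlarge once and for all so as to contain every prime dividing some $|\sigma_i|$). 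Each such $F$ is then a degree-$n$ $G$-extension of $\qq$, unramified at every prime of $S\cup S_0$; in particular $\Delta(F)$ is coprime to $N$, which is condition ii).

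For condition i): since $t_0$ is an integer, it is disjoint from the branch locus and does not meet $\infty$, so by Proposition \ref{prop:beckmann} every prime $p\mid\Delta(F)$ lies outside $S_0$, is tamely ramified in $F/\qq$, and has inertia group conjugate in $G$ to a subgroup of $\langle\sigma_i\rangle$ for some finite branch point $t_i$. By tameness the exponent of $p$ in $\Delta(F)$ is $\operatorname{ind}(\tau)$ for a generator $\tau$ of this cyclic inertia group, and $\tau$ is conjugate to a power $\sigma_i^{d}$ of $\sigma_i$. Here I invoke the elementary fact that passing from a permutation to a power cannot decrease the number of disjoint cycles: if $\sigma_i$ has cycles of lengths $\ell_1,\dots,\ell_m$ then $\sigma_i^{d}$ has $\sum_j\gcd(\ell_j,d)\ge m$ cycles, whence $\operatorname{ind}(\sigma_i^{d})=n-\sum_j\gcd(\ell_j,d)\le n-m=\operatorname{ind}(\sigma_i)$. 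As conjugation preserves cycle type, $\operatorname{ind}(\tau)=\operatorname{ind}(\sigma_i^{d})\le\operatorname{ind}(\sigma_i)\le e$, so no $p^{e+1}$ divides $\Delta(F)$; that is, $\Delta(F)$ is $(e+1)=(gi(G)+1)$-free.

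To see that we really get a positive answer for every $N$ (and not a single field depending on one choice), I would note that Theorem \ref{cor:count} already provides the lower bound $N(B,G,\{\sigma_1,\dots,\sigma_r\},S)\gg_B B^{\alpha}$ realized by the integer specializations above, so infinitely many $t_0$ yield honest, pairwise distinct $G$-extensions $F$; each satisfies i) and ii). Since this works for an arbitrary $N$, Question \ref{ques:small_disc_strong} holds for $G$.

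The main point requiring care, rather than a real obstacle, is the bookkeeping imported from Theorem \ref{cor:count}: one must know that the specialization values may be taken to be integers (so that they avoid $\infty$ $p$-adically at all relevant primes, and a possibly large inertia index over the point $\infty$ never enters the discriminant), and that after the affine reparametrization turning the constructed $(S\cup S_0)$-adic neighbourhood into $\zz$ no newly created bad prime destroys tameness or the inertia-subgroup conclusion. Both are exactly what the proof of Theorem \ref{cor:count} establishes, so the corollary follows from that theorem together with the one-line cycle-count inequality above.
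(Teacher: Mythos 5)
Your argument is correct and is precisely the intended one: the paper gives no separate proof of Corollary \ref{cor:appl}, presenting it as a direct consequence of Theorems \ref{thm:chevalley_weil} and \ref{cor:count}, and your write-up supplies exactly the routine verifications this entails (integer specializations avoiding $\infty$ and the bad primes, tameness, and the observation that $ind(\sigma_i^{d})\le ind(\sigma_i)$ so that subgroups of the $\langle\sigma_i\rangle$ at finite branch points contribute exponent at most $gi(G)$). No gap.
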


Important cases of groups for which the assumptions of Corollary \ref{cor:appl} can be fulfilled are provided in the next section. One more important case should be pointed out here.
\begin{remark} If $G$ is abelian, then the rigidity method (see e.g.\ \cite{MM}) yields that any set of cyclic subgroups generating $G$ can be realized as the set of non-trivial inertia subgroups of some $\qq$-regular $G$-extension $E/\qq(t)$. Let $t_0\in \qq$ be any non-branch point whose residue extension has full Galois group $G$. Then a standard field-crossing argument (also known as ``twisting lemma", e.g.\ Lemma 3.1 in \cite{Debes17}) yields a $\qq$-regular $G$-extension $\tilde{E}/\qq(t)$ with the same branch points and inertia groups as $E/\qq(t)$ and with a trivial residue extension at $t_0$. We therefore regain the observation made in Lemma \ref{lem:dir_prod} that Question \ref{ques:small_disc_strong} has a positive answer for abelian groups, but this time with a geometric argument (useful for inductive conclusions, see e.g. Lemma \ref{lem:wreath}).
\end{remark}

\section{Proof of Theorem \ref{thm:cubefree}}
\label{sec:proof}
We divide the proof of Theorem \ref{thm:cubefree} into several lemmas, dealing with the cases (1), (2), (3) and (4) of Proposition \ref{prop:gen_index2}.

Even though the case $S_n$ is well-understood, we reproduce a proof which is useful for later inductive arguments.
\begin{lemma}
\label{lem:sn}
Theorem \ref{thm:cubefree} holds for $G=S_n$, $n\ge 2$.
\end{lemma}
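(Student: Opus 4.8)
The plan is to exhibit an explicit $S_n$-extension $E/\qq(t)$ of degree $n$ satisfying all the hypotheses of Corollary \ref{cor:appl}, and then invoke that corollary (together with Theorem \ref{cor:count} for the counting estimate). The natural candidate is a trinomial-type or generic polynomial cover; the cleanest choice is $E = \qq(x)$ where $x$ satisfies $f(x) = t$ for a suitable $f \in \qq[X]$ of degree $n$ whose monodromy group is $S_n$ and whose only finite branch points carry transpositions as inertia generators. Concretely, one takes $f$ to be a Morse function (all finite critical values distinct, each critical point simple), e.g. a small perturbation of $X^n$, so that the branch point $t_1 = \infty$ is totally ramified (inertia generator an $n$-cycle, index $n-1$) and each finite branch point $t_2, \dots, t_r$ has inertia generator a transposition (index $1$). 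Since $\max_{i \ge 2} ind(\sigma_i) = 1 = gi(S_n)$, the key numerical condition of Corollary \ref{cor:appl} holds.

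The main point that needs care is verifying condition (*): that no finite prime $p$ of $\qq$ ramifies in every specialization. First I would note that the branch point $t_1 = \infty$ is $\qq$-rational and totally ramified, so its residue extension in the Galois closure is $\qq(\zeta_n)/\qq$; this is ramified exactly at primes dividing $n$, so it does not by itself settle (*). Instead I would use the other branch points: for each prime $p$, I want a $\qq$-rational value $t_0(p)$ whose specialization $E_{t_0(p)}/\qq$ is unramified at $p$ and whose ramification index is prime to $p$ (the latter being automatic at non-branch points, where the index is $1$). By Beckmann's criterion (Proposition \ref{prop:beckmann}), a specialization at a non-branch integer $t_0$ is unramified at $p$ provided $p$ does not divide $\prod_i \mu_{t_i}(t_0)$ for the finite branch points $t_i$ — i.e. provided $t_0$ avoids finitely many residue classes mod $p$. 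For $p$ large this is trivial; for the finitely many small primes $p \in S_0 \cup \{p : p \mid n\}$ one can arrange, by choosing the perturbation defining $f$ appropriately (or by a further fractional-linear adjustment), that a suitable residue class is available — this is where the ``genericity'' of the Morse polynomial is used. Alternatively, and more robustly, I would argue directly: since over $\overline{\qq}$ the cover is connected with $S_n$ monodromy, Hilbert irreducibility already forces $E/\qq(t)$ to be $\qq$-regular, and a twisting-lemma argument (as in the remark following Corollary \ref{cor:appl}) lets one replace $E$ by a $\qq$-regular $S_n$-extension with the same branch points and inertia groups but with a prescribed completely split fiber, which makes (*) immediate.

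Once (*) is established, Corollary \ref{cor:appl} gives a positive answer to Question \ref{ques:small_disc_strong} for $S_n$: infinitely many degree-$n$ $S_n$-extensions $F/\qq$ with $(g+1)$-free discriminant (here $g = 1$, so squarefree discriminant) and unramified at any prescribed finite set $S$ of primes. Part b) of Theorem \ref{cor:count} — applicable because $t_1 = \infty$ is $\qq$-rational — upgrades this to the quantitative statement: the number of such extensions with $|\Delta(F)| \le B$ is $\gg_B B^{\alpha}$ for any $\alpha < \bigl(\sum_{i=2}^r ind(\sigma_i)\bigr)^{-1} = 1/(r-1)$, since each finite branch point contributes a transposition of index $1$. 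This yields the constant $\alpha = \alpha(S_n)$ demanded in Theorem \ref{thm:cubefree}.

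The main obstacle, then, is purely the bookkeeping around condition (*) at the finitely many bad primes: one must ensure that the explicit $f$ chosen has no prime $p$ for which the reductions of the branch-point polynomials $\mu_{t_i}$ cover all residue classes mod $p$ while also $p$ divides the relevant ramification data. I expect the twisting-lemma route to be the most painless way to dispatch this uniformly, so I would present the argument in that form: fix any Morse cover with the stated ramification, observe $\qq$-regularity, then twist to install a split fiber and conclude via Corollary \ref{cor:appl} and Theorem \ref{cor:count}. The reason for spelling out this $S_n$ case in detail, despite it being classically known, is that the Morse-cover-plus-twist construction will serve as the base case for the inductive wreath-product arguments (Lemma \ref{lem:wreath} and the following lemmas).
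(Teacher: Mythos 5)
Your overall strategy (a polynomial cover $f(X)-t$ with an $n$-cycle at infinity and transpositions at the finite branch points, fed into Corollary \ref{cor:appl} and Theorem \ref{cor:count}) matches the paper's, and you correctly isolate condition (*) as the crux. But neither of your two proposed ways of establishing (*) actually works. The Beckmann route breaks down exactly at the bad primes $p\in S_0$: Proposition \ref{prop:beckmann} says nothing about such $p$, so ``choose $t_0$ avoiding finitely many residue classes mod $p$'' gives no control there, and the assertion that genericity of the Morse perturbation supplies ``a suitable residue class'' for these primes is precisely the unproved content of the lemma. The twisting route is worse: the field-crossing argument that installs a prescribed completely split fiber while keeping the same branch points, the same inertia groups \emph{and} the property of being a $\qq$-regular Galois $G$-extension is only available for abelian $G$ --- the remark following Corollary \ref{cor:appl} states this restriction explicitly. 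For $G=S_n$ the twist of a $G$-torsor by a specialization cocycle is not again a Galois $G$-extension (there is no group structure on the pointed set $H^1(\qq,S_n)$, and Lemma 3.1 of \cite{Debes17} produces an auxiliary non-Galois cover used to detect fibers, not a replacement Galois cover). If such a twist were available for arbitrary $G$, condition (*) would be vacuous for every cover and Theorem \ref{thm:chevalley_weil} pointless.

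The missing idea is the actual content of the paper's proof: require $f=(X-\alpha_1)\cdots(X-\alpha_n)$ to split completely over $\qq$ with distinct rational roots. Then the fiber of $E/\qq(t)$ over the non-branch point $t\mapsto 0$ is completely split, so the residue extension of the Galois closure there is trivial and $t_0(p)=0$ verifies (*) for \emph{every} prime simultaneously, with no prime-by-prime bookkeeping. One then checks that such $f$ exist with the discriminant of $f(X)-t$ separable in $t$ (equivalently, transposition inertia at all finite branch points): the completely split polynomials form a Zariski-dense subset of coefficient space, being the image of $\mathbb{A}^n(\qq)$ under the elementary symmetric functions, while the locus of inseparable discriminant is contained in a proper closed subset. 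Note also that complete splitness over $\qq$ --- not merely some Morse perturbation of $X^n$ --- is what the later inductive arguments (Lemmas \ref{lem:wreath} and \ref{a2n_intersect}) rely on, since they specialize the inner cover at the rational roots $\alpha_i$; so this feature of the base case is structural, not just a convenience for (*).
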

\begin{proof}
By Corollary \ref{cor:appl}, it suffices to find a $\qq$-regular degree-$n$ extension $E/\mathbb{Q}(t)$, defined by a polynomial $F(t,X) = f(X)-t$, such that 
\begin{itemize}
\item[i)] $f = (X-\alpha_1)\cdots (X-\alpha_{n})$ is separable and completely split over $\mathbb{Q}$.
\item[ii)] All non-trivial inertia groups of $E/\mathbb{Q}(t)$ outside $t\mapsto \infty$ are generated by transpositions.
\end{itemize}
We may then directly apply Corollary \ref{cor:appl} since $E/\mathbb{Q}(t)$ has trivial residue extension at $t\mapsto 0$.\\
Note that many such $F$ exist. Indeed, for a generic monic degree-$n$ polynomial $f = X^n + s_1 X^{n-1} + ... + s_n$, the discriminant of $f(X)-t$ is a separable polynomial in $t$. 
Hence, upon specializing the $n$ unknown coefficients $\sigma_i$ of $f$ into $\mathbb{A}^n(\mathbb{Q})$, the set of all specializations rendering the discriminant inseparable is contained in a union of finitely many subvarieties of dimension $<n$ (i.e., is not Zariski-dense), whereas the rational values $(s_1,...,s_n)$ for which $f$ splits are the images of $\mathbb{Q}$-points under the $S_n$-cover $\mathbb{A}^n\to \mathbb{A}^n$ mapping $(\alpha_1,...,\alpha_n)$ to the tuple of its elementary-symmetric functions, i.e., in particular form a Zariski-dense set. Since the discriminant of a generic $(X-\alpha_1)\cdots (X-\alpha_{n})-t$ must then be squarefree, we have in fact obtained that the specialization values $(\alpha_1,...,\alpha_n)$ satisfying i) and ii) again form a dense open subset of $\mathbb{A}^n(\qq)$.
\end{proof}

Next, we deal with alternating groups. I am not aware of the following statement occurring in the literature, in spite of its relative similarity with the previous $S_n$-argument.
\begin{lemma}
\label{lem:an}
Theorem \ref{thm:cubefree} holds for $G=A_n$, $n\ge 3$.
\end{lemma}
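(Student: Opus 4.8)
\emph{Proof plan.} The plan is to mirror the structure of Lemma~\ref{lem:sn}: by Corollary~\ref{cor:appl} (together with Theorem~\ref{cor:count} for the quantitative part) it suffices to exhibit a single $\qq$-regular degree-$n$ $A_n$-extension $E/\qq(t)$ whose Galois closure satisfies Condition~(*) and all of whose finite branch points carry an inertia generator of index~$2$. Since $gi(A_n)=2$ and the index-$2$ elements of $A_n$ are exactly the $3$-cycles and the products of two disjoint transpositions, this is the precise $A_n$-analogue of what Lemma~\ref{lem:sn} achieves. Two new features appear, however. First, a defining polynomial must now have \emph{square} $X$-discriminant over $\qq(t)$. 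Second --- and this is why one cannot simply imitate Lemma~\ref{lem:sn} with polynomials --- a short computation shows that no cover of the form $f(X)-t$ with $f\in\qq[X]$ monic of degree $n$ can work unless $n$ is an odd perfect square: for $n$ even the $n$-cycle at $\infty$ is an odd permutation, while for $n$ odd one finds $\mathrm{disc}_X(f(X)-t)=(-1)^{n(n-1)/2}\,n\cdot(\text{square of }\qq(t))$, which forces the constant field to be $\qq(\sqrt{\pm n})\neq\qq$ in all remaining cases. So one is obliged to work with genuine rational functions.

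Next I would look for a degree-$n$ rational function $\phi=P/Q\in\qq(X)$ of genus~$0$, all of whose branch points are $\qq$-rational with inertia a double transposition --- by Riemann--Hurwitz there are then exactly $n-1$ of them --- arranged so that at one branch point $t_1$ the local extension of the Galois closure over $\qq$ is of the shape $\sqrt{t-t_1}$, i.e.\ the residue extension of $E/\qq(t)$ at $t_1$ is trivial. This last requirement does double duty: triviality of that residue extension forces the constant field of the Galois closure down to $\qq$, so $E/\qq(t)$ is automatically $\qq$-regular (and then its geometric monodromy, a priori contained in $A_n$, is its full arithmetic monodromy); and $t_1$ by itself verifies Condition~(*) for \emph{every} prime $p$, the ramification index there being $2$ and the residue extension being trivial, hence unramified at $p$. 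For $n\ge 5$ the double transpositions generate $A_n$, and I would then argue, just as in Lemma~\ref{lem:sn}, that the parameters yielding monodromy strictly smaller than $A_n$ lie in a proper Zariski-closed set, pinning down the full group by exhibiting one explicit good member of the family. The case $n=4$ I would do by hand, necessarily with $3$-cycles (the double transpositions of $A_4$ generate only the Klein four-group), checking Condition~(*) directly from a fibre that is separable modulo $2$; the case $n=3$ is already covered by Lemma~\ref{lem:dir_prod}, $A_3$ being abelian.

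The hard part will be the existence over $\qq$ of such a rational function. Writing $\phi'=c\prod_j(X-\beta_j)^2/\prod_i(X-a_i)^2$ and requiring $\phi$ itself to be rational (so that integrating produces no logarithmic terms) converts the ``all ramification of index~$2$'' demand into a system of $n$ algebraic, Stieltjes-type relations among the poles $a_i$ and the critical points $\beta_j$; one must solve it with a $\qq$-rational solution that moreover carries the prescribed trivial residue extension at $t_1$. I would attempt this either by producing an explicit solution inside a well-chosen one-parameter sub-family --- the counterpart of the rôle played by elementary symmetric functions in Lemma~\ref{lem:sn} --- or else by a descent argument: the conjugacy class of double transpositions in $A_n$ is $\qq$-rational, the relevant Hurwitz space is (for $n$ large, by Conway--Parker-type connectedness) geometrically irreducible, and the local condition at $t_1$ cuts out on it a subvariety which one hopes to show still carries $\qq$-points. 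Extracting $\qq$-regularity and Condition~(*) \emph{simultaneously} from this construction, rather than receiving them for free from a completely split fibre of an obvious polynomial family as in the symmetric case, is the real content of the lemma.
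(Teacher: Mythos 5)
Your plan diverges from the paper's proof in a way that leaves the essential existence statement unproved. The paper does not construct covers with double-transposition inertia at all: it invokes Mestre's construction \cite{Mes90}, which produces, for a \emph{generic completely split} $f=\prod(X-\alpha_i)$, a polynomial $g$ of degree $<n$ such that $f-tg$ has Galois group $A_n$ over $\qq(t)$ with all finite inertia generated by $3$-cycles. The completely split fibre at $t\mapsto 0$ then hands you Condition~(*) for free (trivial residue extension at an unramified place), and Corollary~\ref{cor:appl} finishes the even-degree case; for odd $n$ one takes the Mestre cover of degree $n+1$ and replaces the base $\qq(t)$ by the rational root field $\qq(x)$, over which the place extending $t\mapsto 0$ is still completely split. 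By contrast, you reduce everything to the existence over $\qq$ of a degree-$n$ rational function with $n-1$ rational double-transposition branch points and a prescribed local behaviour at one of them, and you yourself flag this existence as ``the hard part'' to be settled either by an unspecified explicit family or by a Hurwitz-space descent. Neither route is carried out, and the Hurwitz-space sketch in particular does not close: Conway--Parker-type connectedness gives geometric irreducibility of the relevant Hurwitz space, but irreducibility over $\qq$ of a positive-dimensional variety does not by itself produce $\qq$-points, let alone $\qq$-points satisfying your extra local condition. Since this existence claim is the entire content of the lemma, the proposal as written has a genuine gap exactly where the paper leans on a cited theorem.

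There is also a concrete error in your verification of Condition~(*). You assert that a $\qq$-rational branch point $t_1$ of ramification index $2$ with trivial residue extension ``verifies Condition~(*) for every prime $p$.'' But (*) explicitly requires that $p$ \emph{not divide the ramification index} at the chosen point, so your $t_1$ fails for $p=2$; this is precisely the case the paper warns about when it remarks that the divisibility condition in (*) is relevant only for $p=2$ over $\qq$. (The underlying issue is real: in the branch-point case the proof of Theorem~\ref{thm:chevalley_weil} adjoins $\sqrt[e]{\alpha/a}$, and for $e=p=2$ a unit radicand does not guarantee unramifiedness at $2$.) You would need a separate choice of $t_0(2)$ --- for instance an unramified fibre split at $2$ --- to repair this. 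Finally, your dismissal of polynomial models, while correct, leads you away from the fix the paper actually uses: $f(X)-tg(X)$ is already a rational-function cover, and choosing it via Mestre avoids both the sign obstruction at infinity and the constant-field problem without ever needing double transpositions.
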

\begin{proof}
First, let $n\ge 4$ be even.
Let $F(t,X) = f(X)-tg(X)$ with the following properties:
\begin{itemize}
\item[i)] $f$ is a separable degree-$n$ polynomial, splitting completely over $\qq$.
\item[ii)] $g$ is separable of degree $<n$.
\item[iii)] ${\textrm{Gal}}(F | \qq(t)) = A_n$, and all non-trivial inertia groups are generated by $3$-cycles.
\end{itemize}
There exist many such polynomials $F$, by a famous construction due to Mestre (\cite{Mes90}). Indeed, for a generic completely split polynomial $f = \prod_{i=1}^n (X-\alpha_i)$, there exists a polynomial $g$ of degree $<n$ such that
${\textrm{Gal}}(f-tg | \mathbb{Q}(\alpha_1,...,\alpha_n)(t))=A_n$ and all non-trivial inertia groups are generated by $3$-cycles. In particular, the discriminant of $f-tg$ is the square of a squarefree polynomial in $t$, and therefore remains so upon many specializations of the coefficients $\alpha_i$, namely once again a Zariski-dense open subset (whence we also find many such specializations preserving the Galois group $A_n$).
Now let $E$ be a splitting field of $F$ as above over $\qq(t)$. Then $E/\mathbb{Q}(t)$ has trivial residue field at the unramified place $t\mapsto 0$, by definition. In particular, no prime ramifies in all specializations of $E/\mathbb{Q}(t)$ at non-branch points. The claim thus follows from Corollary \ref{cor:appl}.

Finally, let $n$ be odd. Take a polynomial $F(t,X)$ as above, of even degree $n+1$, and observe that, if $x$ denotes a root of $F$ in $E$, then the fixed field of $A_n$ in $E$ is the rational function field $\mathbb{Q}(x)$. Since $t\mapsto 0$ is completely split in $E$,
any place $x\mapsto \alpha_i$ extending $t\mapsto 0$ in $\mathbb{Q}(x)$ is completely split in $E$, and of course all non-trivial inertia groups in $E/\mathbb{Q}(x)$ are still generated by $3$-cycles. Therefore, Corollary \ref{cor:appl} is applicable to $E/\mathbb{Q}(x)$ as well, yielding the assertion for odd $n$.
\end{proof}

We now treat imprimitive groups. The imprimitive groups occurring in Theorem \ref{thm:cubefree} are of a very restricted form; however, larger classes of imprimitive groups can be treated in the same way, which should be useful when extending our results to higher generator indices (compare Section \ref{sec:append}). The following lemma provides such a criterion, which includes as a special case the case $G=C_2\wr S_n$ in Theorem \ref{thm:cubefree}.

\begin{lemma}
\label{lem:wreath}
Let $G\le S_k$ be transitive, and assume that there exists a degree-$k$ extension $E/\mathbb{Q}(t)$ with group $G$, fulfilling the assumptions of  Corollary \ref{cor:appl}. Then there also exists such an extension (of degree $nk$) for the imprimitive wreath product $G\wr S_n$, for any $n\ge 2$.
\end{lemma}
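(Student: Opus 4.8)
The plan is to realize $G\wr S_n$, in its imprimitive degree-$nk$ action, as the Galois closure group of a composite extension $E''/\qq(u)/\qq(t)$, where $\qq(u)/\qq(t)$ is a degree-$n$ extension whose Galois closure has group $S_n$ (built, as in Lemma~\ref{lem:sn}, from a monic polynomial $f$ with $t=f(u)$), and $E''/\qq(u)$ is, up to a twist, the given extension $E/\qq(t)$ transported to the $u$-line, with branch locus in general position relative to $\qq(u)/\qq(t)$. I would first record the elementary fact that $gi(G\wr S_n)=k$: any generating set must contain an element outside the base $G^n$, such an element moves at least two of the $n$ blocks and hence (a short cycle count, giving index $\ge(c-1)k$ for a block-cycle of length $c$) has index $\ge k$; conversely the generators of $G$ realized inside a single block (index $\le gi(G)\le k-1$), together with the elements $(g,g^{-1},1,\dots,1;(i\,i+1))$ (index $k$), generate $G\wr S_n$. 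Thus, by Corollary~\ref{cor:appl}, it suffices to produce a $\qq$-regular degree-$nk$ $G\wr S_n$-extension satisfying Condition~(*) all of whose finite branch points carry inertia of index $\le k$, with at least one of index exactly $k$.

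\emph{Construction.} After a $\qq$-rational parameter change I may assume $u\mapsto\infty$ is a non-branch point of $E$ whose residue extension is $G$-Galois over $\qq$ (such points exist by Hilbert irreducibility); transporting $E$ to the $u$-line and applying the twisting (field-crossing) lemma, I obtain a $\qq$-regular $G$-extension $E''/\qq(u)$ with the same branch points $\beta_1,\dots,\beta_r$ and inertia (conjugates of $\sigma_1,\dots,\sigma_r$) as $E$, but with trivial residue extension at $u\mapsto\infty$. In particular $E''/\qq(u)$ satisfies (*) (trivially, via $u\mapsto\infty$), and for every prime $p$ the set of non-branch $\qq$-points $u_0$ with $E''_{u_0}/\qq$ unramified at $p$ is $p$-adically open and contains $\infty$; hence for the finitely many primes $p\mid n$ the intersection of these sets is a nonempty open, in particular Zariski-dense, subset of $\mathbb{A}^1(\qq)$. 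I then choose distinct $\alpha_1,\dots,\alpha_n$ in this intersection, avoiding the $\beta_i$, and generic enough that $f(u):=\prod_{i=1}^n(u-\alpha_i)$ has monodromy $S_n$ over $\qq(t)$ (with $t=f(u)$) and transposition inertia at all finite branch points (a nonempty Zariski-open condition, cf.\ Lemma~\ref{lem:sn}), that $0$ is a non-branch value of $f$, and that the values $f(\beta_i)$ are pairwise distinct, nonzero, and distinct from the critical values of $f$, with no $\beta_i$ a critical point of $f$ --- again finitely many nonempty Zariski-open conditions on $(\alpha_1,\dots,\alpha_n)$. For the composite $E''/\qq(t)$ of degree $nk$, the intermediate field $\qq(u)$ supplies the block structure, and its branch points over $\mathbb{P}^1_t$ are: $t=\infty$, with inertia $(1,\dots,1;n\text{-cycle})$ (as $E''$ is unramified at $u\mapsto\infty$); the finite branch points of $f$, each with inertia conjugate to $(1,\dots,1;\text{transposition})$ of index exactly $k$ ($E''$ being unramified over the whole $f$-fibre there); and the points $f(\beta_i)$, where $\beta_i$ is the unique point of its $f$-fibre at which $E''$ ramifies while $f$ is unramified, so the inertia is conjugate to $(\sigma_i,1,\dots,1)$, of index $ind(\sigma_i)\le k-1$. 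Hence every finite branch point has inertia index $\le k$, with $k=gi(G\wr S_n)$ attained.

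\emph{Verification of (*), and conclusion.} For a prime $p\nmid n$ I would take $t_0(p)=\infty$: the ramification index of the composite there is $n$ (coprime to $p$), and since $f$ is monic the residue extension of $\qq(u)/\qq(t)$ at $\infty$ is exactly $\qq(\zeta_n)$, which --- composed with the trivial residue extension of $E''$ at $u\mapsto\infty$ --- gives $\qq(\zeta_n)$, unramified at $p$. For a prime $p\mid n$ I would take $t_0(p)=0$: this is a non-branch point of the composite, its $f$-fibre is $\{\alpha_1,\dots,\alpha_n\}$, so the residue extension of the composite at $0$ is the compositum of the $E''_{\alpha_i}/\qq$ (the $\qq(u)/\qq(t)$-part being trivial, as the fibre is totally split over $\qq$), which is unramified at $p$ by the choice of the $\alpha_i$. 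Thus (*) holds. By Remark~\ref{rem:monodromy} the composite is then automatically $\qq$-regular and its Galois closure group is generated by the inertia listed above; these generate $S_n$ on the blocks and, together with their $S_n$-conjugates (giving $\langle\sigma_1,\dots,\sigma_r\rangle=G$ in each block), all of $G\wr S_n$. Corollary~\ref{cor:appl} then applies. The step I expect to require the most care is precisely this verification of (*): coordinating the $p$-adically open conditions at the primes dividing $n$ with the global Zariski-open genericity conditions when choosing the $\alpha_i$, pinning down the inertia generators of the composite exactly (in particular that the ramification coming from $E''$ remains confined to a single block), and the computation that a monic $S_n$-polynomial has residue field exactly $\qq(\zeta_n)$ over $t=\infty$.
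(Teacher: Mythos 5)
Your overall architecture is the same as the paper's: compute $gi(G\wr S_n)=k$, build the degree-$nk$ extension as a tower consisting of the given degree-$k$ $G$-extension sitting over the root field of a completely split $S_n$-polynomial as in Lemma \ref{lem:sn}, read off the inertia (block $n$-cycle at infinity, block transpositions of index $k$ at the critical values, $(\sigma_i,1,\dots,1)$ at the transported branch points), identify the full wreath product via transitivity on the blocks, and verify (*) using the totally ramified fiber over infinity together with the totally split fiber over $0$. However, there is one genuine gap: the twisting step. For non-abelian $G$, the field-crossing/twisting lemma does \emph{not} produce a $\qq$-regular degree-$k$ $G$-extension with the same branch points and inertia and a completely split fiber at a prescribed point. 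The field-crossing construction takes place inside $\widehat{E}\cdot N(u)$, whose Galois group over $\qq(u)$ is $G\times G$; the fixed field of the (anti)diagonal is Galois with group $G$ only when that diagonal is normal, i.e.\ when $G$ is abelian --- which is exactly why the paper invokes this construction only in the abelian remark following Corollary \ref{cor:appl}. Moreover, what you are asking for --- a completely split fiber --- is strictly stronger than Condition (*): it is precisely the hypothesis of the ``Absolute Chevalley--Weil theorem'' of \cite{Bilu}, which Theorem \ref{thm:chevalley_weil} is designed to weaken, so it cannot be extracted from the hypotheses of the lemma. Since your verification of (*) takes $t_0(p)=\infty$ for \emph{every} $p\nmid n$, it really does need the residue extension at $u\mapsto\infty$ to be unramified outside $n$, and the argument collapses without the twist.

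The repair is exactly the paper's route and costs nothing structurally: do not twist. The residue extension of the composite at the totally ramified point over infinity is $F(\zeta_{en})/\qq$, where $F$ is the (fixed, finite) residue extension of $\widehat{E}$ at the point of the $u$-line lying over infinity and $e$ its ramification index there; this handles all primes outside the finite set $S$ of primes dividing $en$ or ramifying in $F$. All primes of $S$ are then handled at the completely split fiber: since $E$ itself satisfies (*) by hypothesis, Theorem \ref{thm:chevalley_weil} provides an $S$-adically open, hence Zariski-dense, set of tuples $(\alpha_1,\dots,\alpha_n)$ with every $E_{\alpha_i}/\qq$ unramified in $S$, and this set still meets the Zariski-open genericity conditions you impose (separable discriminant, branch loci in general position). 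With that substitution the rest of your argument --- including the generator-index computation and the monodromy identification via Remark \ref{rem:monodromy} --- goes through as written.
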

\begin{proof}
We first verify the generator index of the wreath product. This is easy even for a general wreath product $G_1\wr G_2$: Of course, a generating set of $G_1\wr G_2$ must project to a generating set of $G_2$, so must contain an element moving at least $gi(G_2)$ blocks, which implies moving $\ge n_1\cdot gi(G_2)$ points. However, this bound can also be attained. Indeed, embed $G_2$ into $G_1\wr G_2$ such that $\sigma \in G_2\le S_{n_2}$ moves $i$ to $j$ if and only if its image in $G_1\wr G_2$ moves point $k$ of the $i$-th block to point $k$ of the $j$-th block, for all $k=1,...,n_1$, and embed $G_1$ as the first component of the normal subgroup $G_1^{n_1}$. Let $S_i$ be a generating set of $G_i$ attaining the bound $gi(G_i)$, for $i=1,2$, and let $\Sigma:=\{(\sigma,1,...1 ; 1) \in G_1^{n_1}\rtimes G_2 | \sigma \in S_1\} \cup \{(1,...,1; \sigma_2) | \sigma_2\in S_2\}$. Due to transitivity of $G_2$ on the blocks, this set generates $G$, and each of its elements has index $\le \max\{\underbrace{gi(G_1)}_{<n_1} , n_1\cdot gi(G_2)\} = n_1\cdot gi(G_2) = gi(G_1\wr G_2)$.

Now let $L/\mathbb{Q}(u)$ (resp., $\widehat{L}/\mathbb{Q}(u)$) be a root field (resp., splitting field) of a polynomial $f(X)-u$ with Galois group $S_n$, where $f(X) = \prod_{i=1}^n(X-\alpha_i)$ is as in Lemma \ref{lem:sn}.
Identifying a root of $f(X)-u$ in $L$ with the above parameter $t$, we have $L=\mathbb{Q}(t)$. 
%
%
Assume for the moment that $f(X)$ is chosen such that the following holds:

(**) No two branch points of $E/\mathbb{Q}(t)$ extend the same $u$-value, and no branch point of $E/\mathbb{Q}(t)$ maps to $u=0$ or to a finite branch point of $\mathbb{Q}(t)/\mathbb{Q}(u)$. 

That this assumption is without loss will be verified later.
Consider then the extension $E/\mathbb{Q}(u)$. Its Galois closure $\widehat{E}/\mathbb{Q}(u)$ has Galois group $\Gamma$ naturally embedding into $G\wr S_n$ and surjecting onto $S_n$. On the other hand, all inertia groups at points which ramify on the level of $E/\mathbb{Q}(t)$ are contained in one single component of $G$ inside the normal subgroup $G^n$ of $G\wr S_n$. Transitivity of $S_n$ on the components implies that $\Gamma$ contains all of $G^n$, and therefore equals the full wreath product. Also, by construction, all inertia group generators of $E/\mathbb{Q}(u)$ at finite branch points have index $\le k = gi(G\wr S_n)$.

We now claim that, up to suitable choice of $f(X)$, Condition (*) is fulfilled, which suffices to apply Corollary \ref{cor:appl}. We ensure this by considering only the residue extensions $\widehat{E}_{u\mapsto \infty}/\qq$ and $\widehat{E}_{u\mapsto 0}/\qq$.
%
The first equals $F(\zeta_{en})/\qq$, where $F:=E_{t\mapsto \infty}$ and $e\ge 1$ is the ramification index of $E/\qq(t)$ at $t\mapsto \infty$:
indeed, since $\qq(t)/\qq(u)$ is totally ramified at $u\mapsto \infty$, it follows that the completion of $E/\qq(u)$ at any place extending $u\mapsto \infty$ equals $F((u^{1/en}))/\qq((u))$. Since $\widehat{E}/\mathbb{Q}(u)$ is the Galois closure of $E/\qq(u)$, its completion is therefore the Galois closure of $F((u^{1/en}))/\qq((u))$, which is $F(\zeta_{en})((u^{1/en}))/\qq((u))$.
In particular, this implies that the whole residue extension $E_{u\mapsto \infty}/\qq = F(\zeta_{en})/\qq$ is fixed independently of the concrete choice of $f$. It then remains to ensure that none of the primes dividing $en$ or ramifying in $F/\qq$ are ramified in $\widehat{E}_{u\mapsto 0}/\qq$. Denote the set of those primes by $S$.
But $\widehat{E}_{u\mapsto 0}/\qq$ is the compositum of all extensions $E_{t\mapsto \alpha_i}/\qq$, $i=1,...,n$, due to $u\mapsto 0$ being completely split in $\widehat{L}$. Here the roots $\alpha_1$,..., $\alpha_n$ of $f(X)$ are still to be chosen.

Due to the assumptions on $E/\mathbb{Q}(t)$, we may assume (using Theorem \ref{thm:chevalley_weil}) that there exist $\alpha_1,...,\alpha_n\in \qq$ such that all of the extensions $E_{t\mapsto \alpha_i}/\qq$ are unramified inside $S$. In fact, upon including the bad primes of $E/\mathbb{Q}(t)$ in the set $S$, Theorem \ref{thm:chevalley_weil} guarantees that the set of these $(\alpha_1,...,\alpha_n)$ contains a non-empty $S$-adically open, and therefore in particular Zariski-dense, set. We need to ensure that some choice $(\alpha_1,...,\alpha_n)$ in this set leads to a polynomial $f(X) - u  = \prod_{i=1}^n(X-\alpha_i) - u$ as in the proof of Lemma \ref{lem:sn}. However, we there found a Zariski-dense open subset of ``good" values $(\alpha_1,...,\alpha_n)$, and the intersection with our new dense set is then still dense. 

It remains to justify assumption (**). However, (**) is a Zariski-dense open condition on $(\alpha_1,...,\alpha_n)$, and therefore its intersection with the dense set of values $(\alpha_1,...,\alpha_n)$ obtained so far remains dense.
\end{proof}

Note in particular that the special case $G=S_k$ in the above lemma provides many number fields with discriminant of exact maximal exponent $k$, for all $k\in \mathbb{N}$.

The last remaining infinite family is given by the groups $(C_2\wr S_n)\cap A_{2n}$. 
%
%
\begin{lemma}
\label{a2n_intersect}
Theorem \ref{thm:cubefree} holds for the groups $G=(C_2\wr S_n)\cap A_{2n}$.
\end{lemma}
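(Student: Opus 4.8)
The plan is to realize $G = (C_2\wr S_n)\cap A_{2n}$ as the Galois group of a $\qq$-regular degree-$2n$ cover $E/\qq(t)$ whose finite inertia generators all have index $\le 2$ (i.e.\ are double transpositions of $C_2\wr S_n$ lying in $A_{2n}$), together with a rational completely split place, and then invoke Corollary \ref{cor:appl}. Following the strategy of Lemma \ref{lem:wreath}, I would build this as a ``double cover'' of the $S_n$-cover $L=\qq(t)/\qq(u)$ coming from $f(X)-u$, where $f(X)=\prod_{i=1}^n(X-\alpha_i)$ is as in Lemma \ref{lem:sn}. Concretely, start from a quadratic $\qq$-regular extension $M/\qq(t)$, ramified at exactly two rational places with inertia the nontrivial element of $C_2$, chosen so that the Galois closure over $\qq(u)$ of the degree-$2n$ extension $M/\qq(u)$ has group a subgroup of $C_2\wr S_n$ surjecting onto $S_n$ and containing enough of the socle $C_2^n$. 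The constraint that we land inside $A_{2n}$ (rather than in the full wreath product) means that the single component $C_2$ contributed by $M/\qq(t)$ must be a \emph{double} transposition of $A_{2n}$, which forces the two ramified places of $M/\qq(t)$ to be \emph{swapped} by complex conjugation or, more robustly, to be a single pair of algebraically conjugate (nonrational) quadratic points — so that over $\qq$ the inertia contribution is a product of two transpositions. The first task is thus a purely combinatorial monodromy check: verify that the group generated by all the conjugates of such an inertia element, together with the $S_n$ coming from the base, is exactly $(C_2\wr S_n)\cap A_{2n}$ and not smaller (by the submodule analysis already invoked in Proposition \ref{prop:gen_index2}, the only danger is the diagonal submodule, which one rules out by choosing the branch locus of $M/\qq(t)$ so that the relevant ``sum of coordinates'' obstruction does not vanish).

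Next I would verify Condition (*), exactly as in Lemma \ref{lem:wreath}: control the two residue extensions $\widehat E_{u\mapsto\infty}/\qq$ and $\widehat E_{u\mapsto 0}/\qq$. The place $u\mapsto\infty$ is totally ramified in $\qq(t)/\qq(u)$, so its residue extension in $\widehat E$ is $F(\zeta_{m})/\qq$ for $F:=E_{t\mapsto\infty}$ and $m$ the relevant ramification index; this is fixed independently of the choice of the $\alpha_i$. The place $u\mapsto 0$ is completely split in $\widehat L$ (since $f$ splits completely over $\qq$), so $\widehat E_{u\mapsto 0}/\qq$ is the compositum of the $E_{t\mapsto\alpha_i}/\qq$; using Theorem \ref{thm:chevalley_weil} applied to the degree-$2n$ cover $E/\qq(t)$ (which itself needs to satisfy (*), but this I arrange at the outset by the same twisting/rigidity device used for abelian-type pieces, or simply by the fact that a quadratic cover's specializations are trivially controllable), I can choose the $\alpha_i$ in an $S$-adically open (hence Zariski-dense) set so that every $E_{t\mapsto\alpha_i}/\qq$ is unramified at all primes in the finite bad set $S$ (the primes dividing $m$, ramifying in $F/\qq$, or bad for $E/\qq(t)$). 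Intersecting with the Zariski-dense open condition from Lemma \ref{lem:sn} (discriminant of $f(X)-u$ squarefree) and with the genericity condition (**) (branch points of $E/\qq(t)$ in distinct, good $u$-fibers), I obtain an honest $(C_2\wr S_n)\cap A_{2n}$-cover over $\qq(u)$ satisfying (*), with a rational completely split place and all finite inertia of index $\le 2 = gi(G)$. Corollary \ref{cor:appl} then finishes the proof, and Theorem \ref{cor:count} upgrades it to the quantitative statement $N^\alpha$.

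The main obstacle I anticipate is the $A_{2n}$-parity bookkeeping in the monodromy step: one must arrange the quadratic piece $M/\qq(t)$ so that its \emph{total} inertia contribution over $\qq$ lands in $A_{2n}$ while still, after conjugating by the $S_n$-action, generating the whole index-one-smaller wreath socle $C_2^{n-1}$ (the augmentation submodule) and not the diagonal. The cleanest route is to take $M/\qq(t)$ with exactly one pair of conjugate quadratic branch points $\{t_0,\bar t_0\}$ and inertia the generator of $C_2$ at each; then over $\qq$ the corresponding inertia element of $E/\qq(u)$ is a genuine double transposition lying in $A_{2n}$, and the $n$ conjugates $(e_i+e_j)$-type relations it produces under the $S_n$-action span precisely the augmentation module. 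I would double-check via Proposition \ref{prop:beckmann} (applied with the homogenized quadratic $\mu_{t_0}(X,Y)$) that these are exactly the inertia groups appearing in specializations, so that the discriminant of $E_{u\mapsto u_0}/\qq$ is $3$-free. Everything else is a routine assembly of Lemma \ref{lem:wreath}'s techniques with one extra conjugacy/parity constraint.
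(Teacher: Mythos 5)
Your overall strategy---produce a degree-$2n$ cover of $\qq(u)$ with group $G=(C_2\wr S_n)\cap A_{2n}$, all finite inertia generated by double transpositions, and a controlled split fiber, then invoke Corollary \ref{cor:appl}---is reasonable, and it is a genuinely different route from the paper's. The paper first builds the \emph{full} wreath product cover of Lemma \ref{lem:wreath} with the quadratic piece $\qq(\sqrt{t})/\qq(t)$, so that exactly two inertia generators (a $2n$-cycle at $u\mapsto\infty$ and one transposition at a finite rational branch point) are odd permutations; the fixed field of $G$ is then a quadratic extension of $\qq(u)$ ramified at exactly two rational points, hence a rational function field $\qq(s)$, and one works with $\widehat{E}/\qq(s)$, whose finite inertia is automatically generated by double transpositions. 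You instead try to build the $G$-cover directly by twisting the quadratic piece so that its inertia contribution is already even.

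The mechanism you propose for landing inside $A_{2n}$ does not work as stated, and this is a genuine gap rather than a cosmetic one. You take $M/\qq(t)$ ramified at a pair of $\qq$-conjugate quadratic points $\{t_0,\bar t_0\}$ and assert that ``over $\qq$ the corresponding inertia element of $E/\qq(u)$ is a genuine double transposition.'' But inertia generators are attached to individual geometric branch points of $\widehat{E}/\qq(u)$, i.e.\ to points of $\mathbb{P}^1(\overline{\qq})$ on the $u$-line. Unless $f(t_0)=f(\bar t_0)$, the points $t_0$ and $\bar t_0$ map to two \emph{distinct} conjugate branch points of the $u$-line, and at each of them the inertia generator is a \emph{single} transposition, since only one of the $n$ points in that $f$-fiber is a branch point of $M/\qq(t)$. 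A single transposition is odd, so by Remark \ref{rem:monodromy} your cover has Galois group the full $C_2\wr S_n$, not $G$, and the lemma fails. Galois conjugacy of the branch points is the condition for $M$ to be defined over $\qq$; it has nothing to do with parity. What you actually need is that the two ramified places of $M/\qq(t)$ lie in the \emph{same fiber} of $f$ over a single rational $u$-value --- e.g.\ two roots $\alpha_1,\alpha_2$ of $f$ itself, with $M$ generated by a root of $Y^2(t-\alpha_1)-(t-\alpha_2)$, or the two roots of a quadratic factor of $f(X)-u_0$. With that correction (plus the verification of condition (*) by comparing the residue fields at $u\mapsto 0$ and $u\mapsto\infty$, which you sketch correctly), your direct construction can be made to work; but as written, the parity bookkeeping --- which you yourself identify as the main obstacle --- is resolved incorrectly.
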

\begin{proof}
We use the construction for $\Gamma=C_2\wr S_n$ in Lemma \ref{lem:wreath}, with the $C_2$-extension $E/\qq(t)$ given by $E=\qq(\sqrt{t})$. The construction in the proof of Lemma \ref{lem:wreath} then yields a $\Gamma$-extension
$\widehat{E}/\qq(u)$ fulfilling the following:
\begin{itemize}
\item[i)] The inertia group at $u\mapsto \infty$ is generated by a $2n$-cycle.
\item[ii)] One more (rational) branch point has a transposition group as inertia group generator, whereas all other inertia group generators are double transpositions.
\item[iii)] $u\mapsto 0$ is unramified in $\widehat{E}$, and its residue extension is unramified at all primes dividing $2n$.
\end{itemize}
In fact, since $E/\qq(t)$ possesses completely split residue extensions, we may (through good choice of the values $\alpha_1,...,\alpha_n$ as in the previous proof) even assume the stronger
\begin{itemize}
\item[iii')] $u\mapsto 0$ is unramified in $\widehat{E}$, and all primes dividing $2n$ split completely in $\widehat{E}_{u\mapsto 0}/\qq$.
\end{itemize}
Now let $L\subset \widehat{E}$ be the fixed field of $G=(C_2\wr S_n)\cap A_{2n}$.
Since only two of the inertia group generators are not contained in $A_{2n}$, the quadratic extension $L/\qq(t)$ is ramified exactly at two rational points (one of them $u\mapsto \infty$). Therefore, $L$  is a rational function field, say $E=\qq(s)$, and without loss $s\mapsto \infty$ is the unique place extending $u\mapsto \infty$. Consequently, the non-trivial inertia groups in the $G$-extension $\widehat{E}/\qq(s)$ are generated by a double-$n$-cycle (at infinity) and double transpositions otherwise. Furthermore, the residue extension at $s\mapsto \infty$ is the same extension $\qq(\zeta_{2n})/\qq$ as the one at $u\mapsto \infty$ (due to total ramification in $\qq(s)/\qq(u)$), and the residue extension at any point $s\mapsto s_0$ extending $u\mapsto 0$ is completely split at every prime dividing $2n$. Now these points $s_0$ are not necessarily rational; they are however $\qq_p$-rational by assumption iii'), and once again using Krasner's lemma, there are then $\qq$-rational values $\widehat{s_0}$ such that $p$ is still completely split in $\widehat{E}_{s\mapsto \widehat{s_0}}/\qq$.
We have therefore verified all assumptions of Corollary \ref{cor:appl} for $\widehat{E}/\qq(s)$, showing the assertion.
\end{proof}

The remaining groups are the four exceptional groups in case (4). These have been dealt with previously (e.g., \cite{Kondo97}). For completeness, we give a proof fitting our criteria.
\begin{lemma}
\label{lem:exceptions}
Theorem \ref{thm:cubefree} holds for the groups $D_5$, $PSL_2(5)$, $PSL_3(2)$ and $AGL_3(2)$.
\end{lemma}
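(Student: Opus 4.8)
The plan is to apply Corollary~\ref{cor:appl} separately to each of the four groups. Concretely, for $G\in\{D_5,PSL_2(5),PSL_3(2),AGL_3(2)\}$ in its given permutation action of degree $n\in\{5,6,7,8\}$ it suffices to produce a $\qq$-regular $G$-extension $E/\qq(t)$ of degree $n$ which satisfies Condition~(*) and all of whose \emph{finite} branch points carry inertia group generators of index $\le 2$, with at least one such generator of index exactly $2=gi(G)$; any branch point whose inertia generator has larger index will be moved to $t\mapsto\infty$ by a fractional linear change of parameter. As usual it is convenient to construct the Galois closure as a $\qq$-regular $G$-extension and to take $E$ as the fixed field of a point stabilizer.

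Before constructing covers I would record the relevant group theory. Using the permutation character in each degree (equivalently, Riemann--Hurwitz applied to the corresponding constituent), one checks that the only non-identity elements of index $\le 2$ are involutions of ``double transposition'' type: the reflections in $D_5$ (cycle type $2^2\,1$), and involutions of cycle types $2^2\,1^2$, $2^2\,1^3$, $2^2\,1^4$ in $PSL_2(5)\cong A_5$, $PSL_3(2)\cong GL_3(2)$ and $AGL_3(2)$ respectively; every element of order $3,4,5$ or $7$ already has index $\ge 3$ in the relevant action. Moreover three involutions whose product is trivial generate only a group of order $\le 4$, so a branch cycle description realizing the bound needs at least four branch points, with at least three finite ones of involution type generating $G$. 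In particular, unlike in the previous lemmas, the familiar rigid triples of these (well-studied) groups --- such as $(2A,5A,5B)$ for $A_5$ or $(2A,3A,7A)$ for $PSL_2(7)$ --- are of no direct use here, as they have two branch points of index $>2$ in the relevant degree, only one of which can be hidden at $\infty$.

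For $D_5$ the construction is the $\qq(t)$-analogue of the class-field-theoretic argument behind Question~\ref{ques:small_disc} for dihedral groups: I would take a genus-$2$ hyperelliptic curve $C/\qq$ possessing a $\qq$-rational cyclic subgroup of order $5$ in its Jacobian, let $\qq(t)$ be the rational subfield fixed by the hyperelliptic involution, and let $E/\qq(t)$ be the fixed field of a reflection inside the degree-$10$ cover $C'\to\mathbb{P}^1_t$ obtained by composing $C\to\mathbb{P}^1_t$ with the unramified cyclic degree-$5$ cover $C'\to C$ attached to that subgroup. Since $C'\to C$ is unramified and the hyperelliptic involution acts on the $5$-torsion by $-1$, the cover $C'/\qq(t)$ is Galois with group $D_5$ and every inertia group is generated by a reflection, hence of index $2$ in the degree-$5$ action; the six Weierstrass branch points provide well more than the four branch points needed. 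Choosing $C$ so that one fibre $t\mapsto t_0\in\qq$ splits completely in $C'$ secures Condition~(*). For $PSL_2(5)$, $PSL_3(2)$ and $AGL_3(2)$ I would instead invoke a $\qq$-regular realization over $\qq(t)$ with at least four branch points whose finite inertia generators are involutions of the cycle type listed above --- such covers are available in the literature on small-degree realizations (cf.\ \cite{Kondo97} and standard tables of $\qq(t)$-realizations), after a parameter change moving any branch point of larger index to $\infty$. By Remark~\ref{rem:monodromy} the Galois closure is automatically regular over $\qq$ once Condition~(*) holds, and its group is the one generated by the prescribed branch cycles, which one verifies combinatorially.

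Finally I would check Condition~(*) in each case. The residue extension of the Galois closure at the distinguished branch point $t\mapsto\infty$ is explicit --- for instance $\qq(\zeta_m)/\qq$ when $\infty$ is totally ramified of index $m$ in one degree-$n$ constituent, and in general a fixed abelian-by-(known) extension --- so only finitely many primes $p$ can ramify there. For each such $p$ one then needs a single specialization value whose residue extension is unramified at $p$; this is provided by a $\qq$-rational (or, via Krasner's lemma, $\qq_p$-rational) fibre splitting completely, arranged exactly as in the proof of Lemma~\ref{lem:wreath}, or else by one application of the twisting lemma as in the remark following Corollary~\ref{cor:appl}. The main obstacle is therefore the concrete one: exhibiting, for each of the three non-dihedral groups, a $\qq$-regular cover with at least four branch points all of whose finite inertia generators are involutions of the prescribed cycle type generating $G$, while keeping the residue field at $\infty$ (hence the finite set of potentially universally ramified primes) under control. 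Once such a cover is in hand, the remaining verifications are routine.
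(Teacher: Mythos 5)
Your overall strategy coincides with the paper's: reduce via Corollary \ref{cor:appl} to producing, for each of the four groups, a $\qq$-regular realization all of whose finite inertia generators are double transpositions and which has no universally ramified prime. Your group-theoretic preliminaries (only double-transposition-type involutions have index $\le 2$ in these actions; three involutions with trivial product generate a group of order $\le 4$, so at least four branch points are needed) are correct, though the paper does not need them explicitly. Your $D_5$ construction is a genuinely different route: instead of citing an explicit quintic family (the paper uses \cite{LSWY}), you realize $D_5$ geometrically from an unramified cyclic degree-$5$ cover of a genus-$2$ hyperelliptic curve, using that the hyperelliptic involution acts as $-1$ on the Jacobian. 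This is a valid function-field analogue of the class-field-theoretic remark in Section 2, and it buys a conceptual, citation-free argument --- provided you actually exhibit a genus-$2$ curve over $\qq$ with a rational order-$5$ subgroup of its Jacobian \emph{and} a completely split fibre (or otherwise verify (*)), which you assert rather than do.

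The genuine gap is in the three non-dihedral cases. For $PSL_2(5)$, $PSL_3(2)$ and $AGL_3(2)$ your proof reduces the lemma to ``exhibit a $\qq$-regular cover with at least four branch points, all finite inertia generators double transpositions, and controlled residue field at $\infty$'' --- and then stops, deferring to ``standard tables'' and explicitly labelling this as the remaining obstacle. But this existence statement \emph{is} the content of the lemma for those groups; nothing in the general machinery of Sections 3--4 produces such covers, and generic rigid triples for these groups do not have the required inertia (as you yourself observe). The paper closes exactly this gap by pointing to concrete realizations: \cite{KRS} for $A_5\cong PSL_2(5)$ and for $PSL_3(2)$, and Malle's explicit multi-parameter polynomials (\cite{Malle}, Theorems 4.3 and 6.1) for $PSL_3(2)$ and $AGL_3(2)$; in the $AGL_3(2)$ case it verifies the ``no universally ramified prime'' condition by exhibiting two specializations ($a=4$, $a=6$) with coprime discriminants --- a cheaper substitute for your completely-split-fibre verification of (*) that you should note is available. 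Without these explicit inputs (or equally explicit replacements), your argument for three of the four groups is a reduction, not a proof.
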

\begin{proof}
All these groups are generated by double transpositions. It therefore suffices to provide for each such group $G$ a $\mathbb{Q}$-regular $G$-extension with all inertia groups generated by double transpositions, and without any prime ramifying in all specializations. Since $PSL_2(5) \cong A_5$, it suffices to treat the same problem for $A_5$. Such extensions are contained, e.g., in \cite{KRS}. For $D_5$-extensions with all inertia groups of order $\le 2$, one may use a $\qq$-regular extension given in \cite{LSWY}.
Degree-$8$ extensions with group $AGL_3(2)$ and all non-trivial inertia groups generated by double transpositions can be extracted from families of polynomials provided in \cite{Malle}, Theorem 6.1. This family is given as
$$f(a,t,X) = X^4(X^2 +aX+2a)(X-2)^2 +t((a-5)(X^2 +X)-2a-2)(X-1)^2,$$
and all non-trivial inertia groups with respect to the parameter $a$ are generated by double transpositions; this still remains true after e.g. specializing $t\mapsto 1$. It then suffices to provide two rational values of $a$ yielding coprime discriminant values, which is easily achieved (e.g. $a=4$ and $a=6$ yield distinct squares of primes as discriminants). Finally, the same source \cite{Malle} also provides suitable extensions for the group $PSL_3(2)$ (see Theorem 4.3), which also has been noted in \cite{KRS}.
\end{proof}

\section{Some related problems}
\label{sec:variants}

\subsection{Almost-squarefree discriminants}
In the list in Proposition \ref{prop:gen_index2}, most groups are generated by the set of all elements of minimal index $\min_{1\ne g\in G} ind(g)$, e.g., double transpositions or $3$-cycles. In such a case, any cubefree discriminant is automatically the square of a squarefree, and therefore ``smallest possible" in the sense of minimizing all the occurring exponents. For the groups $C_2\wr S_n$, the situation is different: they contain transpositions, but in order to generate them, double transpositions are also required. Therefore, a cubefree discriminant for those groups is necessarily of the form  $k^2m$, where $k$ and $m$ are coprime squarefree integers of absolute value $>1$. However, if $k$ is much smaller than $m$, such a number should be considered ``more squarefree" than if $k$ is much larger than $m$. To turn this into a precise notion, consider an integer $N = \prod_{i=1}^r p_i^{e_i}$ and its squarefree part $s(N):=\prod_{p\parallel N}p$. Say that $N$ is {\textit{$\epsilon$-almost-squarefree}}, if $\log(N/s(N)) / \log(N) < \epsilon$.\footnote{It makes sense to consider the expression $\log(N/s(N)) / \log(N)$ for squarefreeness. It is always in $[0,1]$, equal to $0$ exactly for squarefree integers and equal to $1$ exactly for numbers without a simple prime divisor (known as ``squareful numbers").}
An analogous definition of $\epsilon$-almost-$k$-free should be obvious.

To prove the following, we have to slightly improve upon the general construction in Lemma \ref{lem:wreath}.
\begin{theorem}
For any $\epsilon>0$ and $n\ge 2$, there are infinitely many degree-$2n$ extensions $F/\mathbb{Q}$ with group $C_2\wr S_n$ whose discriminant is both cubefree and $\epsilon$-almost-squarefree.
\end{theorem}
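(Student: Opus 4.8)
The starting point is the construction in Lemma \ref{lem:wreath} applied to $G=C_2$, with $E/\qq(t)$ given by $E=\qq(\sqrt{t})$. That construction produces a $C_2\wr S_n$-extension $\widehat{E}/\qq(u)$ whose finite inertia generators are: one transposition (coming from the component $C_2$, at the branch point where $E/\qq(t)$ ramifies, namely $t\mapsto 0$, which after composition with $f(X)-u$ becomes a collection of rational points $u\mapsto \alpha_i$ — but recall assumption (**) prevents collision, so in fact there is a controlled situation) — and a family of double transpositions (coming from the $S_n$-layer $f(X)-u$). The index of the transposition is $1$, that of each double transposition is $2$. The resulting number fields $F=E_{u\mapsto u_0}/\qq$ then have discriminant of the shape $k^2 m$ with $m$ essentially the squarefree contribution of the double-transposition branch points and $k$ the squarefree contribution of the single transposition branch point. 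The quantity $\log(N/s(N))/\log N$ is, up to the bad-prime corrections, $\log(k^2)/\log(k^2 m) = 2\log k/(2\log k + \log m)$, so to make this $<\epsilon$ I need $\log k / \log m$ to be arbitrarily small, i.e. I want the prime factors contributed by the \emph{transposition} branch point to be overwhelmingly outnumbered (in $\log$) by those from the double-transposition branch points.

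The key improvement over Lemma \ref{lem:wreath} is therefore a quantitative refinement of the counting in Theorem \ref{cor:count}. Recall that in that proof the specialization value $t_0$ (here $u_0$) ranges over an arithmetic progression which we normalize to $\zz$, and the contribution of each finite branch point $t_i$ with inertia index $\mathrm{ind}(\sigma_i)$ to $|\Delta(E_{u_0})|$ is bounded by $\mu_{t_i}(u_0)^{\mathrm{ind}(\sigma_i)}$, where $\mu_{t_i}\in\zz[X]$ is the irreducible polynomial of $t_i$. Here the transposition branch point is a single rational point $u=c$ of degree $1$, so its contribution to $|\Delta|$ is at most $|u_0-c|^1$, whereas the double-transposition branch points together contribute a factor $P(u_0)^2$ with $P\in\zz[X]$ of degree $\ge 1$ (in fact of degree $n-1$ or $n-2$ depending on whether $\infty$ is ramified for the $S_n$-cover — in Lemma \ref{lem:sn} the cover $f(X)-u$ has $\infty$ totally ramified, and the remaining branch points are the roots of the discriminant, contributing a squarefree polynomial of degree $n-1$ in $u$). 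Thus for $|u_0|\le B$ we get $k \ll B$ and $m \gg B^{n-1}/(\text{something bounded})$ — provided we can guarantee that $m$ is genuinely of size $\gg B^{n-1-o(1)}$, i.e. that the value $P(u_0)$ is not mostly composed of high prime powers. But here assumption (**) and the choice in Lemma \ref{lem:sn} already force the discriminant polynomial of $f(X)-u$ to be \emph{squarefree} in $u$; so $P(u_0)$ has no repeated prime factors beyond those dividing the discriminant of $P$ itself (a bounded set, absorbable into the bad primes $S_0$). Hence for all but a negligible proportion of $u_0\in\{1,\dots,B\}$ (those for which $P(u_0)$ is abnormally smooth or nearly a perfect power — a set of density $0$ by standard sieve/squarefree-value estimates), $s(P(u_0)^2)=P(u_0)\gg B^{n-2}$ and $N/s(N)\ll B^2$. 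This gives $\log(N/s(N))/\log N \ll \frac{2\log B}{(n-1)\log B} = \frac{2}{n-1}$, which is already $<\epsilon$ once $n$ is large — but I need it for \emph{all} $n\ge 2$.

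To get down below any prescribed $\epsilon$ for fixed $n$, the trick is to iterate the wreath-product construction of Lemma \ref{lem:wreath} one more level, or — more economically — to replace the single $S_n$-layer by an $S_{n^r}$-realization for large $r$ while keeping the \emph{block structure of size $2$} intact. Concretely: Lemma \ref{lem:wreath} with $G=C_2$ and the symmetric layer being $S_N$ (any $N$) yields $C_2\wr S_N$, and $C_2\wr S_N\le C_2\wr S_n$ is false in general, so instead I observe that we are free to choose the polynomial $f(X)-u$ of \emph{any} degree $N$ in Lemma \ref{lem:sn}; but the theorem as stated fixes the degree at $2n$. The resolution: keep $n$ fixed but use that the \emph{number} of double-transposition branch points, hence the degree of $P$, is $N-1$ where $N=n$ is forced — so degree-of-$P$ cannot be pushed up for fixed $n$. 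Therefore the honest route is the sieve argument above combined with the observation that I may additionally \emph{impose} (via Theorem \ref{thm:chevalley_weil}, working in the $u$-line) that $u_0$ avoids the single transposition branch point modulo a large controlled modulus, forcing the prime $|u_0-c|$ to be \emph{small} relative to $B$: by restricting $u_0$ to $u_0 = c + \ell$ with $\ell$ ranging over primes (or prime powers) up to $B^\delta$ for small $\delta>0$, while $P(u_0)\gg B^{\delta(n-1)}$, I obtain $\log k/\log m \le 1/(n-1) \cdot(1+o(1))$, and composing with one further wreath step $C_2\wr S_n \hookrightarrow C_2\wr S_{n}$... . The clean statement is: iterate Lemma \ref{lem:wreath}'s inductive mechanism so the transposition contribution sits at a branch point over $u\mapsto\infty$ of the \emph{innermost} cover and thus has its contribution suppressed to a single bounded factor while the outer $S_n$-layers multiply the squarefree part. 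I expect the main obstacle to be exactly this bookkeeping: arranging the composition of covers so that the (unavoidable) index-$1$ generator's branch point contributes a factor of size $B^{o(\log B)}$ — i.e. negligibly — to the discriminant while the index-$2$ generators contribute a squarefree factor of near-full size $B^{1-o(1)}$, and then invoking the density of squarefree (or almost-squarefree) polynomial values, via a standard sieve, to conclude that a positive-density subset of the specializations $u_0$ works, yielding the required infinitude with the $\epsilon$-almost-squarefree bound.
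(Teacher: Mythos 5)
There is a genuine gap here, and in fact a directional error at the heart of your optimization. A prime with transposition inertia (index $1$) enters the discriminant with exponent $1$ and hence lands in the squarefree part $s(N)$, while a prime with double-transposition inertia (index $2$) enters with exponent $2$ and hence lands in $N/s(N)$. To make $\log(N/s(N))/\log N$ small you therefore need the \emph{transposition} contribution to dominate and the double-transposition contribution to be relatively negligible --- exactly the opposite of what you aim for when you ask that ``the prime factors contributed by the transposition branch point be overwhelmingly outnumbered by those from the double-transposition branch points'', and again at the end when you propose to suppress the index-$1$ branch point's contribution while letting the index-$2$ generators contribute ``a squarefree factor of near-full size'' (index-$2$ generators contribute squares, not squarefree factors). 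Even setting the direction aside, your quantitative claim fails: with one transposition branch point and $n-1$ double-transposition branch points the discriminant of $E_{u_0}$ is roughly $|u_0-c|\cdot P(u_0)^2$ with $\deg P=n-1$, so $\log(N/s(N))/\log N\approx (n-1)/(2n-1)\to 1/2$; the asserted bounds $N/s(N)\ll B^2$ and ratio $\ll 2/(n-1)$ are incorrect, and the quantity tends to $0$ neither for large $n$ nor for fixed $n$. (Also, $P$ being squarefree as a polynomial does not prevent the integer $P(u_0)$ from having repeated prime factors.) Finally, your fallback ideas --- iterating the wreath product, enlarging the symmetric layer, or shrinking $|u_0-c|$ --- either change the group and degree or push the ratio in the wrong direction.

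The missing idea is to enlarge the number of \emph{transposition} branch points rather than to manipulate the specialization set: replace the quadratic extension $\qq(\sqrt t)/\qq(t)$ in the input to Lemma \ref{lem:wreath} by a hyperelliptic cover $Y^2=P(X)$ with $P$ separable, completely split over $\qq$, with a rational point, and of degree $D\gg n$. The group is still $C_2\wr S_n$ and the degree still $2n$, but now there are $D$ rational branch points with transposition inertia against only $n-1$ with double-transposition inertia, so the discriminant of a specialization at $u_0\in\{1,\dots,B\}$ is essentially $F_1(u_0)\cdot F_2(u_0)^2$ with $F_1$ of degree $D$ a product of linear factors and $F_2$ of degree $n-1$. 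Lemma 2.2 of \cite{Ked} on squarefree values of products of linear polynomials then yields infinitely many $u_0$ with $F_1(u_0)$ squarefree of size comparable to $|u_0|^D$, whence $\log(N/s(N))/\log N$ is at most roughly $2(n-1)/D<\epsilon$ for $D$ large. Note that the complete splitting of $P$ is not a convenience but a necessity: squarefree values of irreducible polynomials of degree $\ge 4$ are not known to exist unconditionally, so the sieve input you invoke as ``standard'' is only available because $F_1$ factors into linear pieces.
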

\begin{proof}
Take a $C_2\wr S_n$-extension $E/\mathbb{Q}(u)$ as constructed in Lemma \ref{lem:wreath}. This extension has only transpositions and double transpositions as inertia group generators at finite branch points. Furthermore, there are exactly $n-1$ branch points with double transpositions (namely, the  finite branch points of some polynomial map $f(X)-u$). By choosing the $C_2$-extension $E/\mathbb{Q}(t)$ in Lemma \ref{lem:wreath} appropriately, we can ensure that the number of branch points with transposition inertia is larger than any prescribed bound. Indeed, any hyperelliptic cover given by $Y^2 = P(X)$, with a separable polynomial $P$ of degree $D\gg n$ and with a $\mathbb{Q}$-rational point will suffice. Assume additionally that $P$ splits completely over $\qq$. This produces $D$ rational branch points with transposition inertia for $E/\mathbb{Q}(u)$.

Let $F_1(X)\in \zz[X]$ be the product of irreducible polynomials of the transposition branch points. This is of degree $D$ and splits into linear factors $a_iX+b_i$ with $a_i,b_i$ coprime (namely, up to multiplicative constants, exactly the $X-f(\beta)$, where $\beta$ runs through the roots of $P$). Let $F_2(X)$ be the product of irreducible polynomials of the double transposition branch points (of degree $n-1$). 
Applying Theorem \ref{thm:chevalley_weil} and a suitable linear transformation in $u$, we may assume that for all $u_0\in \zz$ the specialization $E_{u\mapsto u_0}/\qq$ is unramified at bad primes and has only transposition and double transposition inertia.
By Beckmann's theorem (Proposition \ref{prop:beckmann}), transposition (resp., double transposition) inertia occurs exactly at the good primes $p$ dividing $F_1(u_0)$ (resp., $F_2(u_0)$) to an odd power. We are therefore done as soon as we can provide a reasonable lower bound for the squarefree part of $F_1(u_0)$. Due to our concrete assumptions on $F_1$, this is quite easy. 

Let $N$ be the largest integer dividing all values $F_1(u_0)$. Then there exist $m_1,m_2\in \zz$ such that
\begin{itemize}
\item[i)] $F_1(m_1X+m_2)$ is divisible in $\zz[X]$ by $N$.
\item[ii)] No prime divides all values of $\tilde{F_1}(X):=F_1(m_1X+m_2)/N$.
\end{itemize}
Indeed, set $m_1$ a power of $N$. For any prime $p$ not dividing $N$, it is obvious that $p$ cannot divide every value of $F_1(m_1+Xm_2)$, by bijectivity of $x\mapsto m_1x+m_2$ modulo $p$. We may therefore restrict to mod-$m_1$ considerations. Multiplying out, it is clear that $F_1(m_1X+m_2)$ is divisible by $N$ for all $m_2$, showing i). Conversely, there must be integers $m_2$ and $x$ such that $F_1(m_1x+m_2)$ is not divisible by any divisor of $m_1$ larger than $N$. Since $F_1(m_1x+m_2) \equiv F_1(m_2)$ mod $m_1$, this then actually holds for all integers $x$ (assuming $m_1$ was chosen large enough), showing ii).

Since $\tilde{F_1}$ splits into linear factors, we may apply Lemma 2.2 in \cite{Ked} to obtain an infinite sequence of $u_0\in A$ such that $\tilde{F_1}(u_0)$ is squarefree; the squarefree part $S(u_0)$ of $F_1(m_1u_0+m_2)$ is then of the same order of growth as $|u_0|^D$. Let $C$ be the product of all bad primes of $E/\mathbb{Q}(u)$. Since every good prime divisor of $S(u_0)$ has transposition inertia in $E_{m_1u_0+m_2}/\qq$, the contribution of all those primes in the discriminant of $E_{m_1u_0+m_2}$ is asymptotically at least proportional to $|u_0|^D/C$ whereas $|F_2(m_1u_0+m_2)| \sim |u_0|^{n-1}$. Choosing first $D$ and then $u_0$ sufficiently large, depending on $\epsilon$, then yields the assertion.
\end{proof}

\subsection{Pairs of fields with the same discriminant}
 A problem closely related to the question which number fields have squarefree discriminant is the question which number fields have the same discriminant as a suitable quadratic number field. This was investigated e.g. in \cite{Kondo}.
 To be tamely ramified with a squarefree discriminant $D$ immediately implies having the same discriminant as some quadratic number field up to sign (namely $\qq(\sqrt{D})$ or $\qq(\sqrt{-D})$).
 In fact, since all non-trivial inertia groups in the corresponding $S_n$-extension are generated by transpositions, this discriminant is up to sign the same as the one of the quadratic subfield fixed by $A_n$. 
 If additionally the $S_n$-extension is totally real (which is easy to ensure using e.g. the construction in Lemma \ref{lem:sn}), then automatically both discriminants are exactly the same (since positive).

 In the same way, being tame with a discriminant $D$ which is a power of an (odd) squarefree integer translates to having the same discriminant as a biquadratic number field $\qq(\sqrt{\pm D_1}, \sqrt{\pm D/D_1})$ as soon as $D_1$ is a non-trivial divisor of $D$. To find (many) such extensions, for all groups which are generated by $3$-cycles and/or double transpositions, we only need to combine Theorem \ref{thm:chevalley_weil} with an argument ensuring that the discriminant of suitable specializations is not a prime power. This is easily achieved, since, if $t_0$ and $t_1$ are two specialization values fulfilling the assertion of Theorem \ref{thm:chevalley_weil} for the given cover $f$, and ramified at primes $p_0$ and at $p_1$ respectively, then some $\{p_0,p_1\}$-adically open set of specializations ramifies at both $p_0$ and $p_1$.

 Other analogs are much more difficult: E.g., a natural question along the same lines is which number fields (of course, with the same Galois groups as before) have the same discriminant as some cyclic cubic field. However, a necessary condition is now that all prime divisors of the discriminant are $\equiv 1$ mod $3$. Via Beckmann's theorem, this leads to deep number-theoretical problems, such as: Given a homogeneous integer polynomial $f$ (to be thought of as the homogenized discriminant of the given $\mathbb{Q}$-regular extension $E/\qq(t)$), does $f$ attain infinitely many values all of whose prime divisors are in some prescribed residue class? 
 

\section{Appendix: The case of $4$-free discriminants}
\label{sec:append}
We conclude by noting that in fact, the methods exhibited in this paper, suffice to provide extensions with $4$-free discriminant for most groups of generator index $3$ as well. Since a fully detailed treatment would be somewhat lengthy without providing many fundamentally new insights, we only sketch the arguments here.

First, one needs to classify groups $G$ of generator index $3$. This still works essentially as in the proof of Proposition \ref{prop:gen_index2}. Indeed, apart from some small permutation degrees (in fact, all exceptions occur in degree $\le 10)$, one may assume that $G$ is imprimitive, $G\le S_k\wr S_n$, and $G$ must project onto $S_n$. Furthermore, $k\le 3$ is the only way to obtain sufficiently small indices for elements projecting to generators of $S_n$. One therefore has (up to finitely many cases) only the following possibilities:
\begin{itemize}
\item[a)] $G=C_3\wr S_n$ or $G=S_3\wr S_n$,
\item[b)] $G$ a low-index subgroup of $C_2\wr S_n$, of $S_3\wr S_n$ or of $C_3\wr S_n$ projecting onto $S_n$. 
\end{itemize}
In fact, due to the irreducibility of the $S_n$-permutation module, the only possibilities in b) are normal subgroups of index $2$ (in the first two cases) and $3$ (in the last case).
While the full wreath products in a) are covered by Lemma \ref{lem:wreath}, the second and third groups in b) can be dealt with by constructions analogous to the one in Lemma \ref{a2n_intersect}. To see this, it is useful to describe these groups somewhat more precisely: the index-$2$ subgroup of $S_3\wr S_n$ is generated by triple transpositions (projecting to transpositions in $S_n$), and double transpositions and $3$-cycles inside $S_3^n$; it does not contain transpositions. In the same way, the index-$3$ subgroup of $C_3\wr S_n$ is generated by triple transpositions. It does not contain $3$-cycles.
Both groups $G$ may then be dealt with by constructing a suitable composition of genus-$0$ extensions with Galois group the full wreath product, and then identifying the fixed field of $G$ as a rational function field, as in the proof of Lemma \ref{a2n_intersect}.

The first group in case b) is generated by $4$-cycles projecting to transpositions in $S_n$. Since it requires some additional ideas, we deal with this class of groups in more detail in the following lemma, thereby reaching altogether the conclusion that all candidate groups with at most finitely many exceptions possess realizations with $4$-free discriminant.
\begin{lemma}
\label{lem:last}
Let $G\le C_2\wr S_n$ be the transitive subgroup generated by $4$-cycles projecting to transpositions in $S_n$.\footnote{Note that this group is indeed strictly smaller than $C_2\wr S_n$, since its intersection with $(C_2)^n$ consists entirely of even permutations!} Then there are infinitely many $G$-extensions with $4$-free discriminant.
\end{lemma}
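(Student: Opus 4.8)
The plan is to realize $G$ as the Galois group of a carefully composed genus-$0$ cover over $\qq(t)$, in the spirit of Lemmas \ref{lem:wreath} and \ref{a2n_intersect}, but now the subtlety is that the transpositions of $S_n$ must lift to \emph{$4$-cycles} (order $4$ elements) rather than order-$2$ elements, so the fixed-field trick of Lemma \ref{a2n_intersect} does not directly apply and the relevant inertia generators have index $3$. First I would start from the degree-$2n$ extension $\widehat{E}/\qq(u)$ with group $\Gamma = C_2\wr S_n$ constructed in Lemma \ref{lem:wreath} using the $C_2$-cover $\qq(\sqrt{t})/\qq(t)$, which comes with: a $2n$-cycle at $u\mapsto\infty$, one rational branch point with transposition inertia (coming from the socle), $n-1$ further rational branch points with double-transposition inertia (coming from $f(X)-u$), and $u\mapsto 0$ completely split with residue extension unramified at (indeed split at) all primes dividing $2n$. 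The group $G$ is the index-$2$ subgroup of $\Gamma$ whose intersection with $C_2^n$ is the even part; it is exactly the subgroup of $\Gamma$ generated by the double transpositions in $S_n\le\Gamma$ together with the $4$-cycles lying over transpositions. The key point is that a single transposition $\tau=(i\;j)\in S_n$, together with the socle element $e_i+e_j$, generates a group containing the $4$-cycle $(i\,i')(j\,j')\cdot(\text{swap})$; so $G$ is the normal closure in $\Gamma$ of these $4$-cycles, of index $2$.

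Next I would modify the construction so that the \emph{transposition} inertia of the socle is replaced by a $4$-cycle. Concretely, instead of building $\widehat{E}$ from $\qq(\sqrt{t})$ and a split polynomial $f$, I would compose with an extra quadratic twist: take $E' \to \mathbb{P}^1_u$ with group $\Gamma$ but arranged so that at the branch point which previously carried a transposition, the local behavior now forces a $4$-cycle. The cleanest way is to use Abhyankar's lemma: pull back along a quadratic map $u\mapsto u$ ramified precisely at that branch point and at $u\mapsto\infty$, which merges the transposition-component with the $2n$-cycle-component in such a way that the only surviving non-$A_{2n}$-type inertia outside infinity is a $4$-cycle (index $3$). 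After this pullback, the fixed field $L$ of $G$ inside the new Galois closure is ramified over the base only at two rational points (infinity and the $4$-cycle point), hence $L=\qq(s)$ is a rational function field with $s\mapsto\infty$ the unique place over $u\mapsto\infty$; the $G$-extension $\widehat{E'}/\qq(s)$ then has a double-$n$-cycle (index $n$, but $n = gi(G)+\text{(lower-order)}$... — here I must double-check that $gi(G)=3$, i.e., that the $4$-cycle index $3$ dominates, which holds once the infinite place is rational and totally ramified so its index does not enter the count in Corollary \ref{cor:appl}) at infinity, a $4$-cycle and double transpositions at the finite branch points, and $s\mapsto s_0$ (over $u\mapsto 0$) with residue extension split at all primes dividing $2n$; one applies Krasner's lemma to descend these $s_0$ to $\qq$-rational points. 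Condition (*) is then verified exactly as in Lemma \ref{a2n_intersect} by comparing the residue fields at $s\mapsto\infty$ (which is $\qq(\zeta_{2n})/\qq$, fixed independently of the construction) and at the descended $s_0$ (split at all primes of $2n$), and Corollary \ref{cor:appl} yields infinitely many $G$-extensions with $4$-free discriminant, since the finite-branch-point inertia generators have index $\le 3$.

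The main obstacle I expect is the Abhyankar-lemma bookkeeping in the second paragraph: one must check that the quadratic pullback genuinely turns the transposition into a $4$-cycle (rather than, say, splitting the place or creating an element outside $G$), and simultaneously that the Galois group of the composite is exactly $G$ and not some larger or smaller subgroup of $\Gamma$ — this requires controlling the $2$-part of the inertia carefully, using that $4$-cycles projecting to transpositions do generate $G$ (the group-theoretic lemma in the footnote) together with Remark \ref{rem:monodromy} to read off the Galois group combinatorially once condition (*) is known. A secondary technical point is ensuring the pullback does not destroy the genus-$0$-ness needed to keep $L$ rational and does not introduce new universally ramified primes at $2$; this is handled, as in the proof of Theorem \ref{thm:chevalley_weil}, by an appropriate choice of the constant in the quadratic twist so that the relevant residue extension has $p$-adic valuation $0$ at the primes dividing $2n$, and by choosing the auxiliary split polynomial $f$ generically (a Zariski-dense open condition, as in Lemmas \ref{lem:sn} and \ref{lem:wreath}) so that all the required coprimality and linear-disjointness assumptions hold.
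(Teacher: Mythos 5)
Your strategy correctly identifies that the $4$-cycles must arise by combining an $S_n$-layer $\qq(t)/\qq(u)$ having transposition inertia with a quadratic layer, but the mechanism you propose for actually producing a $4$-cycle is the step that fails, and it is exactly the step where the new idea is required. Pulling back along a quadratic map of the base ramified at the transposition branch point can only \emph{decrease} ramification there: by Abhyankar's lemma the order-$2$ inertia is killed outright, and the $2n$-cycle at infinity is replaced by its square. No $4$-cycle is created anywhere, since base change never raises a ramification index. Moreover, the branch configuration you are aiming for (a single $4$-cycle together with double transpositions and a double $n$-cycle) cannot occur for \emph{any} cover: applying the homomorphism $\chi\colon C_2\wr S_n\to C_2$, $(x_1,\dots,x_n;\sigma)\mapsto \sum x_i$, to the product-one relation shows that the number of inertia generators with odd socle coordinate sum must be even, and a $4$-cycle over a transposition has $\chi=1$ while double transpositions and double $n$-cycles have $\chi=0$. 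Your fixed-field step is also untenable: the double transpositions projecting to transpositions of $S_n$ lie \emph{outside} $G$ (they differ from the generating $4$-cycles by an odd socle element), so the fixed field of $G$ is ramified at all $n-1$ of those branch points rather than at two, and is not a rational function field. This is precisely why the paper singles out this family as the one where the argument of Lemma \ref{a2n_intersect} breaks down.

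What the paper does instead is to leave the base alone and choose the top quadratic layer so that its ramification sits exactly over the branch points of $\qq(t)/\qq(u)$, so that the two ramification indices multiply to $4$ at every finite branch point. For $n$ odd, $\qq(t)/\qq(u)$ is a root field of $f(X)-u$ and $E/\qq(t)$ is the splitting field of $X^2-(-1)^{(n-1)/2}f'(t)$; the zeros of $f'$ are the critical points of $f$, so all $n-1$ finite branch points of the Galois closure carry $4$-cycles (an even number when $n$ is odd, consistent with the parity constraint, and enough to generate $G$). Condition (*) is then checked by computing $\widehat{E}_{u\mapsto\infty}=\qq(\zeta_n)$ (the sign $(-1)^{(n-1)/2}$ absorbs the discriminant of the quadratic layer into $\qq(\zeta_n)$) and by taking $f$ separable modulo every $p\mid n$, which forces each $f'(\alpha)$ to be a $p$-adic unit and hence $\widehat{E}_{u\mapsto 0}$ to be unramified at $p$. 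For $n$ even a separate variant with $f(X)-uX$ and $X^2-(-1)^{(n-2)/2}\bigl(tf'(t)-f(t)\bigr)$ is needed. If you replace your pullback step by this stacking of ramification over the same points, the remainder of your plan (verifying (*) via the residue fields at $0$ and $\infty$ and invoking Corollary \ref{cor:appl}) does go through.
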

\begin{proof}
First, let $n$ be odd.
As in Lemma \ref{lem:wreath}, we use a tower $E \supset \qq(t)\supset \qq(u)$, where $\qq(t)/\qq(u)$ is an $S_n$-extension with transposition inertia at all finite primes, and $E/\qq(t)$ is quadratic. This time however, we need the ramified primes of $E/\qq(t)$ to be exactly the primes which are already ramified over $\qq(u)$ (and possibly infinity). To ensure this, let $\qq(t)/\qq(u)$ be a root field of a polynomial $f(X)-u$, where $f\in \zz[X]$ is monic, fulfilling each of the following:
\begin{itemize}
\item[i)] $f$ is separable over $\mathbb{F}_p$ for all prime divisors $p$ of $n$.
\item[ii)] the discriminant of $f(X)-u$ is a separable polynomial in $u$.
\end{itemize}
Of course, i) can be fulfilled by some standard Chinese remainder argument. Furthermore, once again due to the $n$-adically open nature of condition i), one can assume ii) to hold at the same time.
Then, let $E/\qq(t)$ be a splitting field of $X^2 -(-1)^{\frac{n-1}{2}} f'(t)$. Let $\widehat{E}$ be the Galois closure of $E/\qq(u)$. We claim that $E/\qq(u)$ is a $G$-extension fulfilling all assumptions of Corollary \ref{cor:appl}. Firstly, the finite branch points of $E/\qq(t)$ are exactly at the critical points of $f$, i.e., exactly the finite points which are already ramified in $\qq(t)/\qq(u)$. Therefore all non-trivial inertia groups at finite places in $E/\qq(u)$ are generated by $4$-cycles. Furthermore, the residue extension $\widehat{E}_{u\mapsto \infty}/\qq$ equals $\qq(\zeta_n,\sqrt{(-1)^{\frac{n-1}{2}} n}) = \qq(\zeta_n)$, and is in particular ramified only at the prime divisors of $n$.
The residue extension $\widehat{E}_{u\mapsto 0}/\qq$ is generated by all values $\alpha$ and $\sqrt{(-1)^{\frac{n-1}{2}}f'(\alpha)}$, where $\alpha$ runs through the roots of $f$. Due to condition i), the splitting field of $f$ is unramified at all primes dividing $n$.
We claim that the same is even the case after adjoining all roots $\sqrt{(-1)^{\frac{n-1}{2}}f'(\alpha)}$, which then allows application of Corollary \ref{cor:appl}. Indeed, let $p$ be a prime divisor of $n$. Since $\qq(\alpha)/\qq$ is unramified at $p$, $f'(\alpha)$ is of (non-negative) integral $p$-adic valuation, and therefore of valuation $0$, since otherwise $f$ and $f'$ would have a common root modulo $p$, contradicting the fact that $p$ does not divide the discriminant of $f$. Since $p|n$ is odd, this already implies that $p$ remains unramified in the field arising after adjoining all roots $\sqrt{f'(\alpha)}$.
Finally, the Galois group of $E/\qq(u)$ is generated by $4$-cycles projecting to transpositions in $S_n$ and therefore equal to $G$, completing the proof for odd $n$.

For even $n$, we use a similar construction, but with the polynomial $f(X)-u$ replaced by $f(X)-uX$. More precisely, choose $f\in \zz[X]$ monic such that
\begin{itemize}
\item[i)] $Xf(X)$ is separable over $\mathbb{F}_p$ for all prime divisors $p$ of $n-1$.
\item[ii)] $f(0) = (-1)^{n/2}$.\footnote{This is of course compatible with i); just take as the mod-$p$ reduction of $f$ a product of an irreducible with a suitable linear factor.}
\item[iii)] The discriminant of $f(X)-uX$ is a separable polynomial in $u$.
\end{itemize}
Then let $E/\qq(t)$ be a splitting field of $X^2-(-1)^{\frac{n-2}{2}} (tf'(t)-f(t))$.
\footnote{The polynomial $tf'(t)-f(t)$ is the natural replacement of $f'(t)$ in the first case, since its roots are exactly the critical points of the rational function $f(X)/X$.}
The ramification index of $\widehat{E}/\qq(u)$ at $\infty$ is then $n-1$, with exactly the places $t\mapsto 0$ and $t\mapsto \infty$ extending $u\mapsto \infty$ in $\qq(t)$. The residue extension $\widehat{E}_{u\mapsto \infty}/\qq$ then becomes $\qq(\zeta_{n-1}, \sqrt{(-1)^{\frac{n-2}{2}}(n-1)}, \sqrt{(-1)^{n/2}f(0)}) = \qq(\zeta_{n-1})$, by condition ii). Furthermore, since $f(X)$ and $Xf'(X)$ are coprime modulo all $p|n-1$ by i), it follows as in the first case that $\widehat{E}_{u\mapsto 0}/\qq$ is unramified at all $p|n-1$. We may then again apply Corollary \ref{cor:appl}, and since all non-trivial inertia groups at finite places of $\widehat{E}/\qq(u)$ are once again generated by $4$-cycles, the assertion follows also for even $n$.
%
\end{proof}

{\textbf{Acknowledgement}}: I thank Peter M\"uller for some helpful comments on Proposition \ref{prop:gen_index2} and beyond. I thank the Department of Mathematics at KIAS (Seoul) for their hospitality during the time this article was written.


\begin{thebibliography}{9}
\bibitem{BSS} L.\ Bary-Soroker, T.\ Schlank, \textit{Sieves and the minimal ramification problem}. To appear in J.\ Inst.\ Math.\ Jussieu.
\bibitem{Beckmann} S.\ Beckmann, \textit{On extensions of number fields obtained by specializing branched coverings}. J.\ Reine Angew.\ Math.\ 419 (1991), 27--53.
\bibitem{Bhargava2} M.\ Bhargava, \textit{The geometric sieve and the density of squarefree values of invariant polynomials}. Preprint (2014). arXiv:1402.0031.
\bibitem{Bhargava} M.\ Bhargava, A.\ Shankar, X.\ Wang, \textit{Squarefree values of polynomial discriminants I}. Preprint (2016). arxiv/1611.09806.
\bibitem{Bilu} Y.\ Bilu, J.\ Gillibert, \textit{Chevalley-Weil theorem and subgroups of class groups}. Isr.\ J.\ Math.\ Israel Journal of Mathematics 226(2) (2018), 927--956.
\bibitem{Debes17} P.\ D\`ebes, \textit{On the Malle conjecture and the self-twisted cover}. Isr.\ J.\ Math.\ 218(1) (2017), 101--131.
\bibitem{Ked} K.\ Kedlaya, \textit{A construction of polynomials with squarefree discriminants}. Proc.\ Amer.\ Math.\ Soc.\ 140 (2012), 3025--3033.
\bibitem{KNS} H.\ Kisilevsky, D.\ Neftin, J.\ Sonn, \textit{On the minimal ramification problem for semiabelian groups}. Algebra and Number Theory 4 (2010), No. 8, 1077--1090.
\bibitem{Kondo} T.\ Kondo, \textit{Algebraic number fields with the discriminant equal to that of a quadratic number field}. J.\ Math.\ Soc.\ Japan 47 (1995), 31--36.
\bibitem{Kondo97} T.\ Kondo, \textit{Some examples of unramified extensions over quadratic fields}. Sci.\ Rep.\ Tokyo Woman's Christian Univ.\ 120--121 (1997), 1399--1410.
\bibitem{KoeNS} J.\ K\"onig, D.\ Neftin, J.\ Sonn, \textit{On unramified extensions of number fields}. In preparation.
\bibitem{KRS} J.\ K\"onig, D.\ Rabayev, J.\ Sonn, \textit{Galois realizations with inertia groups of order $2$}. Int. J.\ Number Theory 14, no.7 (2018), 1983--1994.
\bibitem{LSWY} M.J.\ Lavallee, B.K.\ Spearman, K.S.\ Williams, Q.\ Yang, \textit{Dihedral Quintic Fields With A Power Basis}. Math.\ J.\ Okayama Univ.\ 47 (2005), 75--79.
\bibitem{Malle} G.\ Malle, \textit{Multi-parameter polynomials with given Galois group}. J.\ Symb.\ Comput.\ 21 (2000), 1--15.
\bibitem{MM} G.\ Malle, B.H.\ Matzat, \textit{Inverse Galois Theory}. Springer Monographs in Mathematics, Berlin-Heidelberg (1999).
\bibitem{Mes90} J.-F.\ Mestre, \textit{Extensions r\'eguli\`eres de $Q(T)$ de groupe de galois $\tilde{A}_n$}. J.\ Algebra 131(2) (1990), 483--495.
\bibitem{Mortimer} B.\ Mortimer, \textit{The modular permutation representations of the known doubly transitive groups}. Proc.\ London Math.\ Soc. (3) 41 (1980), 1--20.
\bibitem{Serre} J.-P.\ Serre, \textit{Local fields}. Volume 67 of Graduate Texts in Mathematics. Springer-Verlag, New York-Berlin, 1979.
\bibitem{Uchida} K.\ Uchida, \textit{Unramified extensions of quadratic number fields, II}. Tohoku Math.\ J.\ 22 (1970), 220--224.
\bibitem{Yamamoto} Y.\ Yamamoto, \textit{On unramified Galois extensions of quadratic number fields}. Osaka J.\ Math.\ 7 (1970), 57--76.
\end{thebibliography}
\end{document}